\theoremstyle{plain}
\newtheorem{lemma}{Lemma}
\newtheorem{definition}{Definition}
\newtheorem{proposition}{Proposition}
\newtheorem{theorem}{Theorem}
\newtheorem{question}{Question}
\newtheorem{remark}{Remark}
\newtheoremstyle{derp}
{3pt}
{3pt}
{}
{}
{\upshape}
{:}
{.5em}
{}
\theoremstyle{derp}
\newtheorem{example}{Example}
\newcommand{\Q}{\mathbb{Q}}
\newcommand{\Z}{\mathbb{Z}}
\newcommand{\N}{\mathbb{N}}
\newcommand{\id}{\mathrm{id}}
\newcommand\xqed[1]{%
  \leavevmode\unskip\penalty9999 \hbox{}\nobreak\hfill
  \quad\hbox{#1}}
\newcommand\qee{\xqed{$\fullmoon$}}
\newcommand{\Aut}{\mathrm{Aut}}
\newcommand{\Homeo}{\mathrm{Homeo}}
\newcommand{\gates}{\mathfrak{G}}
\newcommand{\egates}{\hat{\mathfrak{G}}}
\newcommand{\gnets}{\mathfrak{L}}
\newcommand{\egnets}{\hat{\mathfrak{L}}}
\newcommand{\Sym}{\mathrm{Sym}}
\newcommand{\Alt}{\mathrm{Alt}}
\newcommand{\follow}{\mathcal{F}}
\newcommand{\SIAut}{\mathrm{SIAut}}
\newcommand{\SAut}{\mathrm{SAut}}
\newcommand{\diff}{\mathrm{diff}}
\title{Gate lattices and the stabilized automorphism group}
\author{
Ville Salo \\
vosalo@utu.fi
}
\begin{document}
\maketitle

\begin{abstract}
We study the stabilized automorphism group of a subshift of finite type with a certain gluing property called the eventual filling property, on a residually finite group $G$. We show that the stabilized automorphism group is simply monolithic, i.e.\ it has a unique minimal non-trivial normal subgroup -- the monolith -- which is additionally simple. To describe the monolith, we introduce gate lattices, which apply (reversible logical) gates on finite-index subgroups of $G$. The monolith is then precisely the commutator subgroup of the group generated by gate lattices. If the subshift and the group $G$ have some additional properties, then the gate lattices generate a perfect group, thus they generate the monolith. In particular, this is always the case when the acting group is the integers. In this case we can also show that gate lattices generate the inert part of the stabilized automorphism group. Thus we obtain that the stabilized inert automorphism group of a one-dimensional mixing subshift of finite type is simple. 
\end{abstract}

\section{Introduction}


In Section~\ref{sec:SymDynIntro}, we begin with the symbolic dynamical background for our group, by recalling the dimension group representation and the stabilized automorphism group. In Section~\ref{sec:GroupIntro}, we concentrate on the more intrinsic features of our proof. 

\subsection{The stabilized automorphism group of a mixing SFT}
\label{sec:SymDynIntro}

Some of the primary objects of study in the field of symbolic dynamics are the (one-dimensional) \emph{subshifts of finite type}, or \emph{SFTs}. These are (up to appropriate isomorphism) the topological dynamical systems whose points are bi-infinite paths $p : \Z \to E$ in a finite directed graph $(V, E)$, where the constraint on paths is that the edge $p(n)$ must end at the vertex where $p(n+1)$ begins for all $n$, and the dynamics is given by the shift map which simply reparametrizes the path by the formula $\sigma(p)_i = p_{i+1}$. We always assume the graph is \emph{essential}, meaning the in- and out-degree of every node is positive (i.e.\ there are no sources or sinks).

Of special interest are the \emph{mixing} SFTs, which are characterized by the transition matrix of the graph being primitive (meaning some power of it has only positive entries). An important special case, known as a \emph{full shift}, is obtained when the graph has a single vertex. We may write this as simply $(A^\Z, \sigma)$ (taking loops $E = A$ at the vertex as edges).


The \emph{automorphism group} $\Aut(X, \sigma) = \Aut(X)$ of a mixing SFT $X$ is the group of homeomorphisms $f : X \to X$ which commute with the shift map. This is a classical object in the field of symbolic dynamics, introduced by Hedlund \cite{He69} for the full shift, and studied for general mixing SFTs in \cite{BoLiRu88,KiRo90}. The structure of this group is not well understoood, for example, it is an open problem whether the automorphism groups of $\{0, 1\}^\Z$ and $\{0, 1, 2\}^\Z$ are isomorphic.

As is typical with complicated large groups, it is of interest to try to understand the quotients of the group, or equivalently the normal subgroups. The most important known quotient is perhaps the dimension group representation. To define this, recall that one of the main invariants of isomorphism of mixing SFTs is the \emph{dimension group} 
introduced by Krieger \cite{Kr80}.

For a careful treatment of the dimension group see \cite[§7.5]{LiMa95}, we only give a quick overview here. For our purposes a sufficient definition of this group is 
$\Delta_M = \underset{\raisebox{0.02cm}{\footnotesize$\rightarrow$}}{\lim}{}_n \Z^d$,
where we use the defining matrix $M$ of $X$ for the transitions between the groups $\Z^d$. 
The matrix $M$ gives a group automorphism $\delta_M$ of $\Delta_M$, and the resulting pair $(\Delta_M, \delta_M)$ is an isomorphism invariant for $X$ (in particular, it does not depend on the choice of $M$). Now, $\Aut(X)$ induces a natural action on $\Delta_M$ by automorphisms, with $\sigma$ mapping to $\delta_M$, and the resulting representation is independent of the choice of $M$. Furthermore, for each automorphism of $X$, the automorphism induced on $\Delta_M$ is ``positive'', which is a certain index-$2$ restriction \cite{Bo08}. The representation of $\Aut(X)$ by positive $\delta_M$-commuting automorphisms of $\Delta_M$ is the \emph{dimension group representation}. 

Automorphisms in this kernel of this representation called \emph{inert}. It is generally agreed that the kernel, i.e.\ the group of inert elements, contains most of the ``complexity'' of the automorphism group. In this and the following paragraph, we give some concrete arguments in this direction. First, the dimension group representation is always far from faithful: for example, the group $\Aut(A^\Z)$ for any finite alphabet $A$ can be embedded in the kernel of the dimension group representation of $\Aut(X)$, whenever $X$ is a mixing SFT (or even an uncountable sofic shift), as the standard embeddings \cite{KiRo90,Sa18d} take place inside the group of inert automorphisms. See also Section~3.6. Interestingly, while we know the representation is far from injective, it is in many cases surjective \cite{BoLiRu88,Lo13} (though not always, by the important example of \cite{KiRo92}).

Second, the image of the dimension representation is a simpler ``kind'' of group than the kernel. For example for $\{0,1\}^\Z$, the image is isomorphic to $\Z$, and for any full shift it is isomorphic to $\Z^d$ for some $d \geq 1$. In general, we can at the very least deduce from the definition of the dimension group that the image of the dimension representation is a \emph{linear group}, i.e.\ it acts faithfully by automorphisms on a finite-dimensional vector space over a field (to see this, we observe that the direct limit embeds into $\Q^d$, and its automorphisms correspond to linear maps). On the other hand, it is known that already finitely-generated subgroups of the automorphism group (and thus also the kernel of the dimension group representation) exhibit many behaviors that are impossible in linear groups \cite{BoLiRu88,Sa19a,Sa22b}. 


We list some other problems about the automorphism group (see also e.g.\ \cite{Bo08,BoSc21} and \cite[§13.2]{LiMa95}). The inert part of $\Aut(X)$ is often thought of as eliminating some kind of average information flow, and in some special cases (like full shifts), this is true in a quite literal sense \cite{Ka96,Sc20}. On the other hand, the elements of finite order in $\Aut(X)$ also intuitively have no average information flow. The so-called \emph{finite-order generation (FOG) conjecture} predicted that the inert subgroup is generated by its finite order elements. It was resolved in the negative for some mixing SFTs in \cite{KiRoWa97} (see also \cite{KiRo91} for the resolution of a predecessor of this conjecture), but is still open on many individual subshifts, for example for $\{0,1\}^\Z$. 
The \emph{virtual FOG conjecture} of whether the finite order elements generate a finite index subgroup in the inert subgroup, is still open in general.

Other interesting questions are what the relationship is between the inert subgroup and simple automorphisms of Nasu \cite{Na88}, and what the relation is between the commutator subgroup $[\Aut(X), \Aut(X)]$ and the groups discussed above.

All these mysteries are greatly simplified (and mostly solved) in the point of view of the \emph{stabilized automorphism group} introduced by Hartman, Kra and Schmieding in \cite{HaKrSc21}. The stabilized automorphism group of a subshift $X$ is simply the union of the groups $\Aut(X, \sigma^n)$ for all $n \geq 1$, and we will denote it by $\SAut(X)$. Like $\Aut(X)$, this is a countably infinite group, is not finitely generated, and (up to isomorphism) is invariant under topological conjugacy. 

There is an obvious analogue of the dimension group representation for the stabilized automorphism group, which maps $\SAut(X)$ into the union of positive automorphism groups of dimension groups of the $(X, \sigma^n)$. The kernel of this map is a natural analog of inert automorphisms in the stabilized context, and we again call the elements of this kernel inert. In the stabilized setting, the dimension group representation is always surjective \cite{BoLiRu88}.

The analog of the FOG conjecture is true in the stabilized setting -- the inert part, which we will denote by $\SIAut(X)$, is generated by its finite-order elements \cite{Wa90a}. Simple inert automorphisms \cite{Na88} give a generating set as well (see Section~\ref{sec:Inertness}). Furthermore, the abelianization factors through the dimension group representation, i.e.\ inert elements are in the commutator subgroup. More simplifications provided by the stabilized point of view are listed in \cite{HaKrSc21}.

One thing that is not resolved as prettily as one might like is that the dimension group representation is not quite the abelianization of $\SAut(X)$, i.e.\ $[\SAut(X), \SAut(X)] \neq \SIAut(X)$ in general, since irreducible SFTs can have non-abelian dimension representations \cite{BoLiRu88}.

These technical observations are older than the explicit study of the stabilized automorphism group (see Section~\ref{sec:Inertness}), but the introduction of the stabilized automorphism group as a first-class object has lead to very interesting new discoveries. In particular, Schmieding has shown in \cite{Sc22} that the stabilized automorphism group of a mixing SFT in a sense remembers its entropy, allowing us to prove that for example full shifts with alphabets $\{0,1\}$ and $\{0,1,2\}$ have distinct stabilized automorphism groups (which stays open for the usual automorphism groups).

The above discussion suggests that the stabilized automorphism group is important, and its inert part is somehow a very robust object, which can be defined in many ways. The following is one of the main results (and certainly the most difficult result) of \cite{HaKrSc21}, and was the starting point for our work. 

\begin{theorem}[\cite{HaKrSc21}]
\label{thm:FullShiftSimple}
For any one-dimensional full shift $X = A^\Z$, the group $\SIAut(X)$ is simple.
\end{theorem}

Our result generalizes this to all mixing SFTs, and also clarifies the robustness of the inert elements of the stabilized automorphism group.

\begin{theorem}
\label{thm:OneDimensionalSimple}
For any mixing one-dimensional SFT, the group $\SIAut(X)$ is simple. It is the unique minimal normal subgroup of $\SAut(X)$.
\end{theorem}

This shows that $\SIAut(X)$, being the unique minimal normal subgroup, is a special (in particular characteristic) subgroup inside $\SAut(X)$, and can be picked out without knowing its dynamical nature. As mentioned above, it is somewhat disappointing that $\SIAut(X)$ is {not} in general the commutator subgroup of $\SAut(X)$, as one might expect from how large infinite groups usually behave. It turns out, however, that it is the commutator subgroup of a different large group -- the group of gate lattices.

\subsection{Stabilized automorphism group through gate lattices}
\label{sec:GroupIntro}

Let $G$ be a countable group, let $\Sigma$ be a finite (discrete) alphabet, and let $\Sigma^G$ carry the product topology. We consider a subshift $X \subset \Sigma^G$, i.e.\ a set which is a topologically closed and shift-invariant, meaning $G$-translations $(g \cdot x)_h = x_{hg}$ map points of $X$ to points of $X$. The \emph{gates} on $X$ are the homeomorphisms $\chi : X \to X$ which only modify a bounded set of coordinates (for all $x \in X$, we have $\chi(x)_g = x_g$ for all $g \notin F$, for some finite set $F$).


Gates were introduced in \cite{Sa22a} in the one-dimensional case, but variants of them, and more generally ``movement inside the asymptotic relation'', appears in some form in many parts of the symbolic dynamics literature, see for example \cite{PeSc97,Kr83,LiSc99,Sc97a,ChMe16,Kr80a}. We show in the appendix that gates can be interpreted as the topological full group of the asymptotic relation, i.e.\ they are a special case of a very classical idea.

As a new concept, we introduce gate lattices: If $\chi$ is a gate, we can apply it ``at'' $g \in G$ by conjugating $\chi$ with the $g$-translation. If $H$ is a sufficiently sparse finite-index subgroup, these applications commute, and we obtain a well-defined homeomorphism $\chi^H$ by ``applying each $\chi^g$ simultaneously''; such a map is called a \emph{gate lattice}. Here, one can read the word ``lattice'' as a synonym of a finite-index subgroup, so a gate lattice is a gate applied on a lattice.\footnote{Actually, we allow the gate to be applied on a right translate of a finite-index subgroup, but these two definitions give the same group of gate lattices by Lemma~\ref{lem:NoTranslate}.} We write $\gnets(X)$ for the group generated by gate lattices. Of even more importance is the subgroup of \emph{even gate lattices} $\egnets(X)$, meaning roughly that the gates $\chi$ used should perform even permutations ``in all contexts''.

Although our algebraic framework for gate lattices seems to be new, gate lattices do also appear in the literature in various forms. In particular a lot of work on inert automorphisms can be seen through this lens, as we will see in Section~\ref{sec:Inertness} (and which to some extent is precisely the point of this paper). The block representations considered in \cite{Ka96,Ka99} essentially study gate lattices on $\Z^d$ (though the definition is slightly more restrictive). Variants of gate lattices also appear in \cite{BrGaTh20,ArMoEo22}.


Generalizing automorphism groups of $\Z$-subshifts, if $G$ is a countable group, the group $\Aut(X)$ of $G$-commuting homeomorphisms on a subshift $X \subset \Sigma^G$ is a countably infinite group, which typically has quite complicated structure. If $G$ is further residually finite (throughout the paper, we assume $G$ is both residually finite and countably infinite, unless otherwise specified), there is a natural way to define the stabilized automorphism group: for finite-index subgroups $H < G$, the systems $(X, H)$ are $H$-subshifts (up to topological conjugacy), and $\Aut(X, H)$ are groups of homeomorphisms on $X$, so it is natural to define $\SAut(X) = \bigcup_{H, [G : H] < \infty} \Aut(X, H)$. Unlike on $\Z$, there is no standard notion of inertness for the automorphism group even if $X = \Sigma^G$ is a full shift, so it is not easy to define $\SIAut(X)$.

We can relate gate lattices to stabilized automorphism groups in a straightforward way. If $H$ a subgroup of $G$, it turns out that the gate lattice $\chi^H$ (when it is well-defined) commutes with the action of $H$, thus $\chi^H \in \SAut(X)$. It follows that gate lattices form a subgroup of $\SAut(X)$.

We concentrate on the case where $X$ is a subshift of finite type (defined by a finite set of forbidden patterns), which has the ``eventual filling property'' or EFP (Definition~\ref{def:EFP}). In one dimension, i.e.\ when $G = \Z$, mixing and EFP are equivalent for SFTs.

Our main result is the following. 

\begin{theorem}
\label{thm:CommutatorIsSimple}
For any residually finite countably infinite group $G$ and any EFP SFT $X \subset \Sigma^G$,
\begin{itemize}
\item $\egnets(X)$ is simple,
\item $\egnets(X)$ equals the commutator subgroup of $\gnets(X)$,
\item $\egnets(X)$ and $\gnets(X)$ are normal in $\SAut(X)$, and
\item $\egnets(X)$ is the monolith of $\gnets(X)$ and $\SAut(X)$.
\end{itemize}
\end{theorem}

This is proved in Theorem~\ref{thm:MainProof}. Here, recall that a group is called \emph{monolithic} if it has a unique minimal non-trivial normal subgroup, that subgroup being called the \emph{monolith}. In this terminology, the theorem says that $\egnets(X)$ is the monolith of $\gnets(X)$ and $\SAut(X)$, and these latter groups are monolithic. In the terminology of \cite{Ko69}, they are \emph{simply monolithic}, i.e.\ they are monolithic and their monolith is simple. In universal algebraic terms, being monolithic group is the same as \emph{subdirectly irreducible} \cite{BuSa81}, meaning that the group cannot be factored nontrivially as a subdirect product. 

Our result shows in particular that $\egnets(X)$ is a characteristic subgroup in $\SAut(X)$, thus every isomorphism between stabilized automorphism groups $\SAut(X) \cong \SAut(Y)$ must map $\egnets(X)$ onto $\egnets(Y)$. Note that this is true even if the EFP SFTs $X, Y$ are defined over different groups (though we do not have non-trivial examples where this happens).

We show that, under various conditions, the group $\gnets(X)$ is perfect, i.e.\ equal to its commutator: 

\begin{theorem}
Let $G$ be a residually finite countable group and let $X \subset \Sigma^G$ be an EFP SFT. Then we have $\gnets(X) = [\gnets(X), \gnets(X)]$ if one of the following holds:
\begin{itemize}
\item $G = \Z$ (Lemma~\ref{lem:ZPerfect});
\item $X$ has the property that for some finite shape $F$, in every $G \setminus F$ context the number of ways to fill the remaining $F$-hole is even (Lemma~\ref{lem:EvenPerfect});
\item $X$ is a full shift $\Sigma^G$, and $G$ has \emph{halvable finite-index subgroups}, meaning every finite index subgroup of $G$ has a subgroup of even index (Lemma~\ref{lem:HalvablePerfect}).
\end{itemize}
\end{theorem}

As a consequence of the first item, and some simple (classical) observations discussed in Section~\ref{sec:Inertness}, we obtain the following theorem, which in turn (combined with the above results) yields Theorem~\ref{thm:OneDimensionalSimple}:

\begin{theorem}
\label{thm:SIAutIsCommutator}
For any mixing one-dimensional SFT, we have $\SIAut(X) = \gnets(X) = \egnets(X)$.
\end{theorem}

Theorems~\ref{thm:CommutatorIsSimple} and~\ref{thm:SIAutIsCommutator} suggest that, in the absense of a standard definition of inertness on a general residually finite group $G$, we could take $\SIAut(X) = [\gnets(X), \gnets(X)]$ to be the definition of the inert subgroup of the stabilized automorphism group. Another obvious candidate for the inert subgroup is the commutator $[\SAut(X), \SAut(X)]$. However, as discussed in the introduction, already for $X$ a mixing one-dimensional SFT, $\SIAut(X)$ may be a proper subgroup of $[\SAut(X), \SAut(X)]$. 

In fact, we can go further: in the case of a mixing $\Z$-SFT, the subgroup of shift-commuting elements of $\egnets(X)$, i.e.\ $\Aut(X) \cap \egnets(X)$, coincides precisely with the inert subgroup of $\Aut(X)$. Thus, for a subshift (or at least an EFP SFT) on a general residually finite group, we may propose the group $\Aut(X) \cap \egnets(X)$ as a natural generalization of the inert automorphism group of $X$.

The precise statement of our main result, Theorem~\ref{thm:MainProof}, contains an extra generalization on top of  Theorem~\ref{thm:CommutatorIsSimple}, namely we can take any net of finite-index subgroups (with enough normal subgroups), and consider only the subgroups in this net in all the definitions. This does not change the proofs in any way, but it enlarges the class of groups to which our results apply.

\vspace{0.2cm}

We conclude with a few words of comparison between our proof, and the proof of Theorem~\ref{thm:FullShiftSimple} (i.e.\ simplicity of the inert subgroup in the full shift case) in \cite{HaKrSc21}. The starting point, and intuitively the most crucial observation, of these proofs is the same: any nontrivial normal subgroup of $\gnets(X)$ actually contains a nontrivial gate lattice. In \cite{HaKrSc21}, this statement is Lemma~5.2. Its proof is quite technical and spans over 30 pages.

Our proof of the analogous (but more general) claim can be summarized in one sentence that does not really hide any technicalities: commutatoring any nice enough homeomorphism (in particular any element of the stabilized automorphism group) by a gate we obtain a gate, and performing the same commutatoring on a sufficiently sparse finite-index subgroup gives the corresponding gate lattice. Translating this idea into a proof takes us a few pages as well, but most of this work is about setting up the algebra and basic theory of gate lattices.

\section{Setting the scene}
\label{sec:Definitions}

\subsection{Preliminaries}

In this section we fix conventions, recall some basic definitions, and introduce some new ones. By $A \Subset B$ we mean $A$ is a finite subset of $B$. Intervals $[i,j]$ with $i,j \in \Z$ are discrete, i.e.\ $[i,j] = \{k \in \Z \;|\; i \leq k \leq j\}$. By $\Sigma^*$ we denote finite (possibly empty) words over alphabet $\Sigma$, i.e.\ elements of the free monoid, and for $u \in \Sigma^*$, $|u|$ denotes the length of $u$. 
 
For groups, our commutator convention is $[a,b] = a^{-1}b^{-1}ab$, and conjugation is $a^b = b^{-1}ab$. The identity element of an abstract group $G$ is $e_G$. For groups of homeomorphisms (under function composition) the identity element is the identity map, written as $\id$. The notation $\langle \ldots \rangle$ means a group generated by $\ldots$. Here, $\ldots$ may include individual group elements, or sets of them (in which case we use elements of the set as generators).

In groups of homeomorphisms we write composition of $\phi_1$ and $\phi_2$ as $\phi_1 \circ \phi_2$ or simply $\phi_1 \phi_2$, and the rightmost homeomorphism is applied first. The \emph{support} of a homeomorphism is the smallest closed set such that every point outside is fixed.

A group $G$ is \emph{residually finite} if for every $g \neq e_G$, there exists a normal finite-index subgroup $H$ such that $g \notin H$. We say $G$ is a \emph{$2$-group} if every element has finite order which is a power of $2$. By $F_n$ we denote the free group on $n$ generators. By $S_n$ and $A_n$ we denote the symmetric and alternating group acting on $n$ elements, respectively, and $\Sym(A), \Alt(A)$ are the corresponding groups for a finite set $A$.

Throughout this paper (unless otherwise mentioned), $G$ denotes a countably infinite residually finite discrete group, not necessarily finitely-generated. We think of this as an ``ambient'' group, and often omit it in statements (unless we need to specify further properties of it).

We order finite subsets of a set $G$ by inclusion, and something holds for \emph{arbitrarily large} subsets of $G$ if for any finite set $F \Subset G$ it holds for some finite $S \supset F$. If $G$ is finitely-generated, we say a sequence of translated finite-index subgroups $H_ig_i$ gets \emph{arbitrarily sparse} if the distance between elements $hg_i, h'g_i$ is uniformly arbitrarily large for distinct $h, h' \in H_i$, with respect to some right-invariant word metric; equivalently, the word norm of the minimal non-identity element in $H_i$ grows without bound.

On subsets of a group $G$ we use the \emph{Fell topology}, namely the Cantor topology $\{0,1\}^G$ after identifying sets with their characteristic functions. For the subspace of subgroups this is also known as the \emph{Chabauty topology}. For a countable discrete group $G$, a net $H_i$ tends to the trivial group if for every finite subset $F$ of $G \setminus \{e_G\}$, eventually $e_G \in H_i$ and $H_i \cap F = \emptyset$. On a general group, $H_ig_i$ becomes arbitrarily sparse if $(H_i)_i$ tends to the trivial group in this topology.

We will frequently and without explicit mention use the following basic group theory fact, which works for any group $G$; for $K$ one can simply take the kernel of the translation action on left cosets of $H$.

\begin{lemma}
If $H \leq G$ is of finite index, then $H$ contains a normal subgroup $K \triangleleft G$ of finite index.
\end{lemma}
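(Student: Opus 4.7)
The plan is to follow the hint verbatim: take the kernel of the translation action of $G$ on the finite set of left cosets $G/H$.

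First I would define the homomorphism $\phi : G \to \Sym(G/H)$ by $\phi(g)(xH) = gxH$. Since $H$ has finite index, $G/H$ is a finite set and $\Sym(G/H)$ is a finite group. Setting $K = \ker \phi$, $K$ is automatically normal in $G$ as the kernel of a homomorphism, and $[G : K]$ divides $|\Sym(G/H)| = [G:H]!$, so $K$ has finite index in $G$.

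The only remaining step is to check $K \subseteq H$. If $k \in K$, then $\phi(k)$ fixes every coset, in particular $kH = \phi(k)(H) = H$, so $k = k \cdot 1_G \in kH = H$.

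No step here is a real obstacle; the argument is entirely standard, and the lemma is cited only to streamline later use of ``finite-index subgroup'' without always having to pass to a normal refinement.
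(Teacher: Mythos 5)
Your proof is correct and is exactly the argument the paper indicates: the paper gives no formal proof but states that one should take $K$ to be the kernel of the translation action on left cosets of $H$, which is precisely what you do, with the standard verifications of normality, finite index via $[G:K] \mid [G:H]!$, and containment in $H$ all in order.
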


A \emph{topological dynamical system} is a pair $(X, G)$ where $X$ is a nonempty compact metrizable space, $G$ is a countable discrete group, and $G \curvearrowright X$ acts continuously on $X$ (of course, this action is also part of the data, but it is always clear from context). In the case $G = \Z$ we speak of \emph{one-dimensional systems}, and often write such a system as simply $(X, \sigma)$ where $\sigma$ is the homeomorphism corresponding to the cyclic generator $1 \in \Z$.

If $G$ is a countable infinite group and $\Sigma$ a finite set, we consider $\Sigma^G$ with the product topology; topologically it is just the Cantor set. The group $G$ acts on $\Sigma^G$ by $(g, x) \mapsto \sigma_g(x)$ where $\sigma_g(x)_h = x_{hg}$. To shorten formulas, we often omit ``$\sigma$'' and identify elements of $G$ with these translation maps. If $X$ is a topologically closed $G$-invariant set $X \subset \Sigma^G$, $(X, G)$ is called a \emph{subshift} and $x \in X$ is called a \emph{configuration}. When $X \subset \Sigma^G$, or $G$ is clear from context, we often write just $X$ for the subshift $(X, G)$.

For $X \subset \Sigma^G$ and $D \subset G$, for $x \in X$ write $x|D \in \Sigma^D$ for the partial configuration $\forall g \in D: y_g = x_g$ (we do not drop the right argument of $|$ to a subscript, to avoid complex formulas in subscripts and double subscripting). Partial configurations are also called \emph{patterns}, and they are \emph{finite} if their domain is. Write $X|D \subset \Sigma^D$ for the set of patterns $x|D$ where $x \in X$. In the one-dimensional situation for $u \in \Sigma^*$ we write $u \sqsubset X$ for $u \in X|[0,|u|-1]$, with the obvious identification of words and patterns. When a subshift is clear from context, $y \in \Sigma^D$ is a pattern, and $N \subset G$, define $\follow(y, N) = \{z \in \Sigma^N \;|\; \exists x \in X: x|D = y, x|N = z\}$. For two patterns $x \in \Sigma^D, y \in \Sigma^E$ with $D \cap E = \emptyset$, write $x \sqcup y$ for the obvious union pattern with domain $D \cup E$.

When a group $G$ is clear from context, we fix a net $(B_r)_r$ of finite symmetric (a set $B \subset G$ is \emph{symmetric} if $g \in B \iff g^{-1} \in B$) subsets of $G$ that exhausts it (we do not name the directed set of $r$s). We call the finite set $B_r$ the \emph{ball} of radius $r$. For a symmetric set $N$, write $A_{r,N} = NB_r \setminus B_r$ for the \emph{annulus} of \emph{thickness} $N$. As the notation may suggest, we like to pretend our groups are finitely-generated; in this case one may fix some finite set of generators (all results will be independent of this choice, more generally the net). For $d$ the corresponding right-invariant word metric, and $r \in \N$, we can pick $B_r = \{g \;|\; d(g, e_G) \leq r\}$. Picking $N = B_R$, the corresponding annulus is just $A_{r,N} = B_{r+R} \setminus B_r$, explaining the terminology. Patterns whose domain is an annulus $A_{r,N}$ are often called \emph{contexts}, and many of our arguments deal with the various fillings $x \in \follow(P, B_r)$ (or $x \in \follow(P, NB_r)$ if we want to keep the context visible) for contexts $P \in X|A_{r, N}$. Sometimes we use similar terminology with ``full'' contexts $x \in X|G \setminus N$ with $N \Subset G$.

The usual notion of isomorphism for topological dynamical systems is \emph{topological conjugacy}, meaning homeomorphism commuting with the group action. Up to topological conjugacy, we can characterize subshifts on countable groups as expansive actions on compact subsets of the Cantor set, where \emph{expansivity} means that there exists $\epsilon > 0$ such that
\[ (\forall g \in G: d(gx, gy) < \epsilon) \implies x = y \]
where $d$ metrizes the Cantor topology.

If $X \subset \Sigma^G$ is a subshift and $H \leq G$ is a subgroup, then by $(X, H)$ we refer to the topological dynamical system with space $X$ and action the restriction of the $G$-action to the subgroup $H$. When $H$ is of finite index in $G$, the system $(X, H)$ can itself be considered a subshift. Namely pick a set of left representatives $R$ and define $\phi : \Sigma^G \to (\Sigma^R)^H$ by $(\phi(x)_h)_r = x_{rg}$. Alternatively, in terms of the abstract characterization of subshifts, it is easy to see that passing to a subgroup of finite index only changes the expansivity constant, since the $G$-action is continuous. Even if $X \subset \Sigma^G$ is not necessarily $G$-invariant, we say it is a $K$-subshift if it is nonempty, closed and $K$-invariant.

A subshift of the form $\bigcap_{g \in G} gC$, where $C \subset \Sigma^G$ is clopen, is called an \emph{SFT} (short for subshift of finite type). If $P \in \Sigma^D$ where $D \Subset G$, the \emph{cylinder} defined by $P$ is $[P] = \{x \in X \;|\; x|D = P\}$. A clopen set is a finite union of cylinders, and a \emph{window} for an SFT $X$ is any symmetric finite set $N$ (meaning $N = \{g^{-1} \;|\; g \in N\}$) that contains all the sets $DD^{-1}$ such that $D$ is one of the supports of such cylinders. The important property of a window $N$ is the following; we omit the straightforward proof.

\begin{lemma}
If $X$ is SFT, and $N$ is a window for $X$, then for all $y \in X|A_{r,N}$, the choices of fillings $\follow(y, B_r)$ and $\follow(y, G \setminus NB_r)$ are independent in $X$, meaning for any $x \in \follow(y, B_r)$ and $z \in \follow(y, G \setminus NB_r)$, we have $x \sqcup y \sqcup z$ is in $X$.
\end{lemma}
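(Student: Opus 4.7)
The plan is to verify $w := x \sqcup y \sqcup z \in X$ by checking the defining SFT constraint at every translate. Writing $X = \bigcap_{g \in G} gC$, I would first normalize the representation of $C$: the clopen set $C$ can be expressed as $\bigcup_{P \in S}[P]$ with $S$ a finite family of patterns sharing a common domain $D \Subset G$, and the window assumption then gives $DD^{-1} \subset N$. Unfolded translate by translate, membership $w \in X$ becomes the condition that for every $g \in G$, the pattern $w|Dg^{-1}$ lies in $S$ (using the action convention $\sigma_h(w)_k = w_{kh}$, so that $\sigma_{g^{-1}}(w)|D$ identifies with $w|Dg^{-1}$).

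The key geometric step is a dichotomy for the set $F := Dg^{-1}$. Because $FF^{-1} = DD^{-1} \subset N$, if $F$ meets $B_r$ at some $f_0$ then $F \subset Nf_0 \subset NB_r$, whereas if $F$ misses $B_r$ then trivially $F \subset G \setminus B_r$. Thus every such $F$ lies either entirely in $NB_r = B_r \cup A_{r,N}$, or entirely in $G \setminus B_r = A_{r,N} \cup (G \setminus NB_r)$.

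By the hypothesis on $x$ and $z$ I pick witnesses $x', z' \in X$ with $x'|B_r = x$, $z'|(G \setminus NB_r) = z$, and $x'|A_{r,N} = z'|A_{r,N} = y$. In the first case of the dichotomy, $w$ agrees with $x'$ on $F$, so $w|F = x'|F$ belongs to $S$ because $x' \in X \subset gC$; in the second case $w$ agrees with $z'$ on $F$ and the same conclusion holds. Either way the cylinder constraint at $g$ is satisfied, and as $g$ was arbitrary, $w \in X$.

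No step is really delicate: the proof is essentially an unpacking of definitions. The only mild care required is in keeping the side conventions for the shift action straight, and in noting at the outset that the defining cylinders of $C$ may be chosen to share a common domain, which is exactly what makes the window condition act uniformly at every translate.
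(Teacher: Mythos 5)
Your proof is correct and complete; the paper explicitly omits the proof of this lemma, so there is no argument to compare against, but the dichotomy you isolate --- every translate $Dg^{-1}$ of the common cylinder domain satisfies $(Dg^{-1})(Dg^{-1})^{-1}=DD^{-1}\subset N$ and hence lies entirely in $NB_r$ or entirely in $G\setminus B_r$, so the constraint at $g$ can be checked against one of the two witnesses $x'$, $z'$ --- is surely the intended ``straightforward proof.'' One step deserves more care than you give it: the reduction to a common domain is not free. If $C$ is presented as a union of cylinders with several distinct domains $D_1,\dots,D_k$ and $N$ is only required to contain each $D_iD_i^{-1}$ (which is what the paper's definition literally says), then re-expressing $C$ over $D=\bigcup_i D_i$ can push $DD^{-1}$ outside $N$, and the lemma genuinely fails for such an $N$: on $G=\Z$ with alphabet $\{a,b\}$, take $C=\{x : x_0=a\}\cup\{x : x_5=b\}$, so that $X$ forbids exactly the transitions $x_m=b$, $x_{m+5}=a$. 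Here $N=\{0\}$ satisfies the literal definition of a window, the annulus $A_{r,N}$ is empty, and gluing the all-$b$ pattern on $B_r$ (a restriction of the all-$b$ point) to the all-$a$ pattern on $G\setminus B_r$ produces a forbidden transition. So your sentence ``the window assumption then gives $DD^{-1}\subset N$'' does not follow from the multi-domain reading; it is valid only if one takes the definition of window relative to a common-domain presentation of $C$, which is clearly the intent of the paper and is exactly the reading your argument requires. With that reading fixed, every remaining step checks out, including the side conventions for the action.
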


We will need a simple fact about uniform convergence. Note that we only deal with homeomorphisms on Cantor space, so all metrics are equivalent and all continuous functions are uniformly continuous.

\begin{lemma}
\label{lem:UniformComposition}
Let $X, Y, Z$ be metric spaces and let $g_i, g: Y \to Z, f_i, f : X \to Y$ be functions, where $i$ runs over some directed set $\mathcal{I}$. If $g_i \rightarrow g$ and $f_i \rightarrow f$ uniformly and $g$ is uniformly continuous, then $g_i \circ f_i \rightarrow g \circ f$ uniformly.
\end{lemma}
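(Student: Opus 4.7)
The plan is the standard $\varepsilon/2$ argument via the triangle inequality, adapted to convergence along a directed set. Given $\varepsilon > 0$, I want to exhibit an index $i_0 \in \mathcal{I}$ such that for all $i \succeq i_0$ and all $x \in X$, $d_Z(g_i(f_i(x)), g(f(x))) < \varepsilon$.

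First I would split
\[ d_Z(g_i(f_i(x)), g(f(x))) \leq d_Z(g_i(f_i(x)), g(f_i(x))) + d_Z(g(f_i(x)), g(f(x))). \]
For the first summand, uniform convergence $g_i \to g$ on $Y$ gives an index $i_1$ with $\sup_{y \in Y} d_Z(g_i(y), g(y)) < \varepsilon/2$ for all $i \succeq i_1$; in particular plugging in $y = f_i(x)$ bounds the first summand by $\varepsilon/2$ uniformly in $x$. For the second summand, I invoke uniform continuity of $g$ to obtain $\delta > 0$ such that $d_Y(y, y') < \delta$ implies $d_Z(g(y), g(y')) < \varepsilon/2$, and then use uniform convergence $f_i \to f$ to obtain $i_2$ with $\sup_{x \in X} d_Y(f_i(x), f(x)) < \delta$ for all $i \succeq i_2$; this bounds the second summand by $\varepsilon/2$ uniformly in $x$.

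Finally, since $\mathcal{I}$ is directed, I pick $i_0 \succeq i_1, i_2$ simultaneously, and for every $i \succeq i_0$ the two bounds combine into the desired $\varepsilon$-bound. The only mildly subtle point is that uniform continuity of $g$ (rather than mere continuity) is essential: without it, I could not convert ``$f_i$ is uniformly close to $f$'' into ``$g \circ f_i$ is uniformly close to $g \circ f$.'' There is no real obstacle here; the lemma is genuinely routine once the triangle-inequality split is chosen correctly, and the directed-set nature of $\mathcal{I}$ plays no role beyond allowing one to find a common upper bound of two indices.
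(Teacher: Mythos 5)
Your proposal is correct and follows essentially the same route as the paper: the identical triangle-inequality split into $d_Z(g_i(f_i(x)), g(f_i(x)))$ and $d_Z(g(f_i(x)), g(f(x)))$, with the first term controlled by uniform convergence of $(g_i)$ and the second by uniform continuity of $g$ together with uniform convergence of $(f_i)$. The only cosmetic difference is that the paper folds the choice of a common index into the selection $i_0 \geq j_0$ rather than invoking directedness at the end, which amounts to the same thing.
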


\begin{proof}
Let $\epsilon > 0$ be arbitrary. Let $j_0$ be such that for $j \geq j_0$ we have $\forall y \in Y: d_Z(g_j(y), g(y)) < \epsilon/2$. Use uniform continuity of $g$ to find $\delta > 0$ such that $\forall x, y \in Y: d_Y(x, y) < \delta \implies d_Z(g(x), g(y)) < \epsilon/2$. Use uniform convergence of $(f_i)_i$ to find $i_0 \geq j_0$ such that for $i \geq i_0$ we have $\forall x \in X: d_Y(f_i(x), f(x)) < \delta$.

Suppose now that $i \geq i_0$ and let $x \in X$ be arbitrary. We have
\[ d_Z(g(f(x)), g(f_i(x))) < \epsilon/2 \] because $d_Y(f(x), f_i(x)) < \delta$ and by the choice of $\delta$, and
we have
\[ d_Z(g(f_i(x)), g_i(f_i(x))) < \epsilon/2 \]
because $i \geq j_0$, so by the triangle inequality we have
\[ d_Z(g(f(x)), g_i(f_i(x))) \leq d_Z(g(f(x)), g(f_i(x))) + d_Z(g(f_i(x)), g_i(f_i(x))) < \epsilon \]
proving uniform convergence of $g_i \circ f_i$ to $g \circ f$.
\end{proof}

\subsection{Eventual filling property and many fillings property}

\begin{definition}
\label{def:EFP}
A subshift $X \subset \Sigma^G$ has the \emph{eventual filling property}, or \emph{EFP}, if
\[ \forall F \Subset G: \exists N \Subset G: \forall x, y \in X: \exists z \in X: z|F = x|F \wedge z|(G \setminus N) = y|(G \setminus N). \]
\end{definition}

EFP is a \emph{gluing property}, meaning it deals with compatibility of patterns on different areas of the group. On the groups $\Z^d$, the EFP property can be seen as a weakening of the uniform filling property, introduced in \cite{RoSa99}. To the best of our knowledge it has not been studied previously even on $\Z^2$.

As a technical weaker notion, we say a subshift has the \emph{many fillings property}, or \emph{MFP}, if as $F \Subset G$ tends to $G$, the number of configurations agreeing with $x|{G \setminus F}$ tends to infinity uniformly in $x \in X$. A subshift is \emph{nontrivial} if it has at least two configurations. The following is a simple exercise (recalling that a closed nonempty subset of Cantor space without isolated points is homeomorphic to Cantor space).

\begin{lemma}
\label{lem:EFPMFPCantor}
Every nontrivial EFP subshift (on an infinite group) has MFP. Every MFP subshift is homeomorphic to Cantor space.
\end{lemma}

An \emph{automorphism} of a subshift $X \subset \Sigma^G$ is a homeomorphism $f : X \to X$ that commutes with the action of $G$. By the definition of the topology on $\Sigma^G$, an automorphism has a (not necessarily unique) finite \emph{neighborhood} $N \subset G$ and a \emph{local rule} $\hat f : \Sigma^N \to \Sigma$ such that $f(x)_g = \hat f(P)$ where $P \in \Sigma^N$ is defined by $P_h = x_{hg}$. This is a special case of the so-called \emph{Curtis-Hedlund-Lyndon theorem} \cite[Theorem~1.8.1]{CeCo10}.\footnote{The theorem is more generally about shift-commuting continuous functions, but here we restrict to the bijective case.} Obviously the inverse of an automorphism is an automorphism as well, so the automorphisms of a subshift form a group denoted by $\Aut(X, G)$. A topological conjugacy between two subshifts is similarly defined by a local rule.

\begin{lemma}
\label{lem:EFPIsEquivalent}
For one-dimensional SFTs, EFP is equivalent to
\[ \exists m: \forall u, v \sqsubset X: \exists w: |w| = m \wedge uwv \sqsubset X. \]
\end{lemma}

(This is the one-dimensional case of the uniform filling property of \cite{RoSa99}.)

\begin{proof}
If the condition holds, then EFP is clear (even without the SFT assumption), namely for any finite set $F \subset [i,j]$ we can pick $N = [i-m, j+m]$ and use the condition on both sides of the interval to glue $F$-patterns (extended arbitrarily to $[i,j]$) to $(\Z \setminus N)$-patterns.

From Curtis-Hedlund-Lyndon, we easily see that topological conjugacy preserves both EFP and the condition in the lemma (even every \emph{factor map}, i.e.\ surjective $G$-commuting continuous map, preserves them), so we can conjugate $X$ to an \emph{edge shift} \cite[Theorem~2.3.2]{LiMa95}, meaning $X$ is the set of paths (sequences of edges with matching endpoints) in a finite directed graph $(V,E)$. Let $M$ be the matrix with $M_{a,b}$ the number of edges from vertex $a$ to vertex $b$

Suppose now that we have EFP. Pick $F = \{0\}$ and $N$ as in the definition of EFP. We can clearly make $G \setminus N$ smaller without breaking the gluing property for this pair, so we may take $N = [i, j]$ with $i \leq 0 \leq j$. Now in particular, by applying the gluing property to all pairs of particular configurations and looking at the rightmost vertex of the edge at $0$, and the leftmost vertex of the edge at $j+1$, we see that for any two vertices $a,b \in V$, we have a path of length $j$ from $a$ to $b$. Thus $M^j$ is a matrix with all entries positive. Now the condition of the lemma holds with $m = j$.
\end{proof}

The condition in the lemma is known to be equivalent to topological mixing \cite[Exercise~6.3.5]{LiMa95}, and we refer to it as simply \emph{mixing}. In the proof we showed that EFP implies that $X$ is defined, as an edge shift, by a \emph{primitive} matrix, namely one with a positive power.

\subsection{A dual notion of continuity}

A crucial property of automorphisms of subshifts is that they satisfy a kind of ``dual version of continuity''. Namely, continuity of a map $f : X \to X$ for a subshift $X \subset \Sigma^G$ (and more generally for $X$ a closed subset of Cantor space) means that information cannot move from the ends of $G$ to near the origin when $f$ is applied, but rather the new value at a particular position is only a function of nearby values. Our dual notion is that information cannot move from near the origin of $G$ to its ends in one step. 

\begin{definition}
Let $G$ be a countable set, $\Sigma$ an alphabet and $X \subset \Sigma^G$. A homeomorphism $f : X \to X$ is \emph{ntinuous} if for each finite set $S$ there exists a finite set $F$ such that for each $g \notin F$, the map $x \mapsto f(x)_g$ factors through the projection $x \mapsto x|{G \setminus S}$. A homeomorphism is \emph{bintinuous} if it is ntinuous, and its inverse is also ntinuous.
\end{definition}

Not every ntinuous homeomorphism is bintinuous, one example is the map $f : \{0,1\}^\N \to \{0,1\}^\N$  defined by $f(x)_0 = x_0$ and $\forall i \geq 1: f(x)_i \equiv x_{i-1} + x_i \bmod 2$.

The following is easy to show.

\begin{lemma}
The bintinuous homeomorphisms on a subshift $X$ form a group.
\end{lemma}

\section{Gates, gate lattices and the stabilized automorphism group}

\subsection{Gates}
\label{sec:Gates}

\begin{definition}
A \emph{gate} on a subshift $X \subset \Sigma^G$ is a homeomorphism $\chi : X \to X$ such that for some $N \Subset G$ we have $\chi(x)_g = x_g$ for all $g \notin N$. Such an $N$ is called a \emph{weak neighborhood}. Write $\gates$ for the group generated by gates.
\end{definition}

Note that ``$\gates$'' is a fancy ``G'', and stands for ``gate''. Of course, $\gates$ depends on $X$, but omitting $X$ in the notation should not cause confusion as we rarely need to consider two subshifts simultaneously.

The following lemma was proved for $G = \Z$ in \cite{Sa22a}; the proof for general $G$ is the same, and follows more or less directly from the definition of the product topology.

\begin{lemma}
\label{lem:StrongWeak}
Let $X \subset \Sigma^G$ be a subshift. A homeomorphism $\chi : X \to X$ is a gate if and only if it admits some $N \Subset G$ and $\hat\chi : X|N \to X|N$ such that for all $g \notin N$ we have $\chi(x)_g = x_g$ for all $x \in X$, and for all $g \in N$ we have $\chi(x)_g = \hat\chi(x|N)_g$.
\end{lemma}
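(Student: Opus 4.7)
The plan is to prove the two directions separately, with the nontrivial content being a localization argument in the spirit of Curtis--Hedlund--Lyndon applied one coordinate at a time.

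One direction is essentially immediate: if $\chi$ admits a strong neighborhood $N$ in the stated sense, then the clause ``$\chi(x)_g = x_g$ for $g \notin N$'' is exactly the defining property of a weak neighborhood, so $\chi$ is a gate with weak neighborhood $N$.

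For the converse, suppose $\chi$ is a gate with some weak neighborhood $M \Subset G$, i.e.\ $\chi(x)_g = x_g$ for all $g \notin M$ and all $x \in X$. The key observation is that for each fixed $g \in G$, the map $x \mapsto \chi(x)_g$ is a continuous function from the compact space $X$ to the discrete finite alphabet $\Sigma$, and therefore it is locally constant. By compactness and the definition of the product topology on $\Sigma^G$, such a map factors through the projection $x \mapsto x|_{N_g}$ for some finite $N_g \Subset G$. (For $g \notin M$ we may simply take $N_g = \{g\}$ since $\chi(x)_g = x_g$, but we only need this for $g \in M$.) Set
\[ N = M \cup \bigcup_{g \in M} N_g, \]
a finite union of finite sets, hence finite. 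Then $N \supseteq M$, so $\chi(x)_g = x_g$ still holds for $g \notin N$, and for each $g \in N$ the coordinate $\chi(x)_g$ depends only on $x|_N$ (either because $N_g \subseteq N$, or because $g \in N \setminus M$ in which case $\chi(x)_g = x_g$ trivially factors through $x|_N$).

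It remains to package the local dependence into a local rule $\hat\chi : X|N \to X|N$. Given $P \in X|N$, pick any $x \in X$ with $x|_N = P$ (which exists by definition of $X|N$) and set $\hat\chi(P) = \chi(x)|_N$. This is well-defined because each coordinate of $\chi(x)|_N$ was shown to depend only on $x|_N = P$, and it lands in $X|N$ because $\chi(x) \in X$. By construction $\chi(x)_g = \hat\chi(x|_N)_g$ for $g \in N$, so $N$ is a strong neighborhood with local rule $\hat\chi$, completing the proof.

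No step should pose a real obstacle; the only mildly delicate point is keeping the quantifiers on ``locally constant'' straight, namely using compactness of $X$ to upgrade pointwise finite dependence to a single finite dependency set $N_g$ valid for all $x \in X$ simultaneously. This is the usual content of Curtis--Hedlund--Lyndon and is already implicit in the paper's earlier discussion of that theorem.
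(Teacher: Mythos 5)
Your proof is correct and is essentially the argument the paper has in mind: the paper omits the proof, noting it ``follows more or less directly from the definition of the product topology,'' and your compactness argument (continuity into a finite discrete alphabet forces factoring through a finite projection, then take the union of the dependency sets with the weak neighborhood) is exactly that. The well-definedness check for $\hat\chi$ and the handling of coordinates in $N \setminus M$ are both handled properly.
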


A set $N$ as in the lemma is called a \emph{strong neighborhood}, and $\hat \chi$ a \emph{local rule}.

It is clear that $\gates$ in fact consists of gates, as we can always increase the strong neighborhoods of two gates to be equal (after which they compose like permutations). For a gate $\chi$ and $g \in G$ write $\chi^g = \sigma_{g^{-1}} \circ \chi \circ \sigma_g$. Note in particular that when $G = \Z$, $\chi^n$ does not refer to iteration -- we never need to iterate a gate. One can see $\chi^g$ as applying the gate ``at'' $g$, if we use the convention where configurations of $X$ are seen as vertex-labelings of some left Cayley graph of the group $G$ (at least when it is finitely-generated). If $\chi$ has strong (resp.\ weak) neighborhood $N$, then $\chi^g$ has strong (resp.\ weak) neighborhood $Ng$.


\begin{lemma}
\label{lem:GatesNormalDerp}
Let $X \subset \Sigma^G$ be a subshift. Let $f : X \to X$ be a homeomorphism with ntinuous inverse and $\chi$ be any gate on $X$. Then $\chi^f$ is a gate as well. Furthermore, if $f$ is bintinous, the map $\chi \mapsto \chi^f$ is an automorphism of $\gates$. 
\end{lemma}

\begin{proof}
Suppose that $N \Subset G$ is a strong neighborhood for $\chi$. 
By ntinuity of $f^{-1}$, outside some finite set $F$, the image of $f^{-1}$ can be determined without looking at the cells in $N$; in other words for $g \notin F$, we have $f^{-1}(\chi(f(x)))_g = f^{-1}(f(x))_g = x_g$ since the application of $\chi$ does not affect the $f^{-1}$-image. This shows that $F$ is a weak neighborhood for $\chi^f$, and thus $\chi^f$ is a gate.

For the second claim, for a bintinuous homeomorphism $f$, the inverse of $\chi \mapsto \chi^f$ is $\chi \mapsto \chi^{f^{-1}}$, where $f^{-1}$ is also a bintinuous homeomorphism. Thus both $f$ and $f^{-1}$ act bijectively on $\gates$ by conjugation. They are inner automorphisms of the group of all homeomorphisms of $X$, so in particular their restrictions to $\gates$ give automorphisms.
\end{proof}

\begin{lemma}
\label{lem:GatesNormal}
Let $X \subset \Sigma^G$ be a subshift. The group $\gates$ is normalized by the group of bintinuous homeomorphisms on $X$. 
\end{lemma}

\begin{proof}
By the above lemma, conjugation by bintinuous homeomorphisms fixes $\gates$, which is what normalization means. 
\end{proof}

Say a gate is \emph{eventually even} if for all large enough strong neighborhoods $N$ the corresponding permutation $\pi \in \Sym(X|N)$ restricted to any complement pattern is even, i.e.\ for any $y \in X|{G \setminus N}$, the restriction of $\pi$ to the set $\follow(y, N)$ is even. We call such $N$ \emph{even neighboods}. Say a gate is \emph{sometimes even} if the same is true for at least one strong neighborhood $N$.

\begin{lemma}
\label{lem:CommutatorCharacterization}
On every subshift, each of the following implies the next.
\begin{enumerate}
\item $\chi = [\chi_1, \chi_2]$ for some gates $\chi_1, \chi_2$;
\item $\chi \in [\gates, \gates]$;
\item $\chi$ is eventually even;
\item $\chi$ is sometimes even.
\end{enumerate}
The last two items are always equivalent, and on an MFP SFT all four are equivalent.
\end{lemma}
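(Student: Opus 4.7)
The first three implications are quick. Clearly (1) implies (2). For (2) implies (3) I would fix a decomposition $\chi = \prod_{i=1}^k [\chi_i, \chi_i']$, take $N$ to be the union of strong neighborhoods of all the $\chi_i, \chi_i'$, and restrict each factor to a follower set: this exhibits $\chi|\follow(y, N)$ as a product of commutators in $\Sym(\follow(y, N))$, hence as an even permutation of $\follow(y, N)$. To upgrade evenness from $N$ to any larger strong neighborhood $M$, I decompose $\follow(y, M)$ as the disjoint union, over patterns $w \in X|(M \setminus N)$ extending $y$, of sub-blocks naturally in bijection with $\follow(y \sqcup w, N)$; since $\chi$ fixes coordinates in $M \setminus N$, its action on $\follow(y, M)$ is the disjoint product of its actions on these sub-blocks, each of which is even by assumption, so the total is even. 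The same sub-block decomposition gives (4) implies (3).

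The substance of the lemma is (4) implies (1) on an MFP SFT. Fix a strong neighborhood $N_0$ of $\chi$ witnessing (4). Using a window $W$ for the SFT together with MFP, I plan to enlarge $N_0$ to a strong neighborhood $M$ that is itself a window and satisfies two conditions: (a) $N_0$ lies in the deep interior of $M$, meaning no window meets both $N_0$ and $G \setminus M$; and (b) every equivalence class of the relation ``$x$ and $x'$ belong to exactly the same sets $\follow(P, M)$'' on $X|M$ has cardinality at least $5$. By windowness each such class is determined by $x$'s values on the window-boundary $\partial_W M := \{g \in M : Wg \cap (G \setminus M) \ne \emptyset\}$, and MFP guarantees that the number of configurations in $X|M$ with any given window-boundary value grows without bound as $M$ grows, which secures both (a) and (b).

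Next I would identify the gate group. Writing $E_1, E_2, \ldots$ for the equivalence classes of the preceding paragraph, every $\follow(P, M)$ is a union of classes, and the group of permutations of $X|M$ preserving each $\follow(P, M)$---which is precisely the group of local rules of gates with strong neighborhood $M$---equals $\prod_j \Sym(E_j)$. The induced permutation $\hat\chi$ of $X|M$ lies in this group, preserves each $E_j$ by (a), and by the sub-block decomposition of the first paragraph applied within each $E_j$ restricts on each $E_j$ to a disjoint product of even permutations; hence $\hat\chi|E_j \in \Alt(E_j)$.

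Since $|E_j| \geq 5$ by (b), Ore's commutator theorem in $\Alt(E_j)$ lets me write $\hat\chi|E_j = [\alpha_j, \beta_j]$ with $\alpha_j, \beta_j \in \Alt(E_j)$. I then assemble $\hat\chi_1 := \prod_j \alpha_j$, $\hat\chi_2 := \prod_j \beta_j$ in $\prod_j \Sym(E_j)$ as local rules of gates $\chi_1, \chi_2$ with strong neighborhood $M$, and conclude $[\chi_1, \chi_2] = \chi$ in $\gates$. The main obstacle I expect is the structural identification of the gate group with $\prod_j \Sym(E_j)$: it requires combining MFP, the windowness of $M$, and the depth of $N_0$ in $M$ in a compatible way so that independent choices on each $E_j$ really glue into global gates.
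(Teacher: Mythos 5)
Your proposal is correct and follows essentially the same route as the paper: the trivial implications, the sub-block decomposition for the equivalence of (3) and (4), and, for (3) $\implies$ (1) on an MFP SFT, an enlargement of the strong neighborhood so that the induced permutation splits into independently permutable blocks of size at least $5$ on which it acts evenly, followed by Ore's commutator theorem. The only cosmetic difference is that the paper's blocks are the fibers $\follow(P, B_r)$ over patterns $P$ on a window-thick annulus $A_{r,N}$ (a partition refining your atoms $E_j$), and the ``obstacle'' you flag is resolved exactly by the window lemma, which identifies the set of fillings compatible with a boundary pattern with $\follow(Q, N_0)$ for any full context $Q$ extending it.
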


\begin{proof}
The implication (1) $\implies$ (2) is trivial. For (2) $\implies$ (3) take any $N$ larger than the radius of all the gates involved in a composition of commutators of gates. For any $y \in X|{G \setminus N}$, the permutation of $\follow(y, N)$ performed in the context is just the corresponding composition of commutators of gates restricted to $\follow(y, N)$, and thus is in the commutator subgroup of the symmetric group of that set, which is the corresponding alternating group. The implication (3) $\implies$ (4) is trivial.

We show (4) $\implies$ (3) in every subshift, so (3) and (4) are equivalent. Simply observe that if $N$ is a strong neighborhood such that $\chi$ performs an even permutation on $N$ in every $(G \setminus N)$-context, then the same is true for any $(G \setminus N')$-context for $N' \supset N$, as for any $(G \setminus N')$-context we can write the permutation $\chi$ performs on $\follow(y, N')$ as a finite composition of even permutions. Namely, for each of the finitely many extensions $z \in \follow(y, G \setminus N)$ the permutation $\chi$ performs on the pattern in $N$ is even by assumption.


It now suffices to show that in an MFP SFT $X$, (3) $\implies$ (1). To see this, let $N$ be a window for $X$ and pick a large strong neighborhood $B_r$ such that there are at least $5$ fillings of each $A_{r,N}$-context. Now increasing the strong neighborhood to $NB_r$, we have a permutation of $X|NB_r$ which does not modify the contents of the annulus $A_{r,N}$ and for each pattern on the annulus performs an even permutation on the pattern inside. Since there are at least $5$ extensions of the pattern and $[S_n,S_n] = \{[g, h] \;|\; g, h \in S_n\}$ for $n \geq 5$ \cite{Or51}, we can write the restriction to each $A_{r,N}$-context as a commutator of two permutations in that same context. For different contexts the permutations commute, so we can write this as a commutator of two gates.
\end{proof}

Due to the last sentence of the previous lemma, we simply call eventually/sometimes even gates \emph{even} (as we are only interested in MFP SFTs here). Write $\egates$ for the group generated by even gates. We point out some immediate corollaries of the above lemma.

\begin{lemma}
\label{lem:CommutatorCharacterization2}
If $X$ is an MFP SFT, then:
\begin{itemize}
\item $\egates = [\gates, \gates]$,
\item $\egates$ is the set of even gates,
\item the commutator width of $\gates$ is $1$,
\item if $\chi \in \egates$, then $\chi^f \in \egates$ for $f$ any bintinuous homeomorphism, and
\item $\egates$ is normalized by the group of bintinuous homeomorphisms.
\end{itemize}
\end{lemma}

\begin{proof}
The first item is the equivalence of items $2$ and $3$ in the previous lemma. The second item follows from item $3$ of the previous lemma: for any even gates $\chi_1, \chi_2$ we can pick a common strong neighborhood, after which it is clear that their composition is also (eventually) even. The third item follows from the item $1$ of the previous lemma. The fourth item follows because the commutator subgroup is characteristic, and $\chi \mapsto \chi^f$ is an automorphism of $\gates$ by Lemma~\ref{lem:GatesNormalDerp} and Lemma~\ref{lem:AutBint}. The fifth is an immediate consequence of the fourth. 
\end{proof}

\begin{example}
It is possible that $\chi$ is even, $N$ is a common strong neighborhood for $\chi$ and $\chi^f$, and $N$ is an even neighborhood for $\chi$, yet $N$ is not an even neighborhood for $\chi^f$. Namely, pick $G = \Z$, and consider $\chi$ and $\chi^{\sigma}$.

For this, pick the matrix $M = \left(\begin{smallmatrix} 1 & 1 \\ 1 & 1 \end{smallmatrix}\right)$ and let $X$ be the corresponding edge shift, i.e.\ at each $n \in \Z+\frac12$ we have a vertex, and at each $n \in \Z$ we have an edge. Let us call the vertices $\{a, b\}$, and note that $M$ simply says we have a unique edges between each pair of vertices. Now pick $\chi$ the unique gate with strong neighborhood $\{0,1\}$ (so $\chi$ sees two edges) that permutes the word nontrivially if and only if the vertex at $-\frac12$ is $a$. In cycle notation, the action on the tuple of vertices at $(-\frac12, \frac12, \frac32)$ is
\[ (aaa \; aba)(aab \; abb)(baa)(bab)(bba)(bbb). \]

Note that $\chi^{\sigma}$ performs the same modification on the word (consisting of edges) appearing in $\{1,2\}$. Now consider these with strong neighborhood $\{0,1,2\}$. Obviously $\chi$ is now an even gate, as in any context where the vertex at $-\frac12$ is $a$ we perform two swaps, and in any context where it is $b$, we do nothing. On the other hand, $\chi^{\sigma}$ always performs an odd permutation, since to get a nontrivial action we must pick the vertex at $\frac12$ to be $a$.

Nevertheless, since $\egates = [\gates, \gates]$, $\chi^\sigma$ must be even. Indeed it is: by shift-symmetry, $\chi$ and $\chi^\sigma$ have the same evenness if computed with neighborhoods $N$ and $N+1$ respectively. \qee
\end{example}



\subsection{Gate lattices}
\label{sec:GateLattices}

We say two gates $\chi, \chi'$ \emph{commute} if they do, i.e.\ if $[\chi, \chi'] = \chi^{-1} \circ (\chi')^{-1} \circ \chi \circ \chi' = \id$. It is clear that if $\chi, \chi'$ have strong radii $N, N'$ respectively, and $N \cap N' = \emptyset$, then $\chi$ and $\chi'$ commute. If $S \subset G$ is a (possibly infinite) subset, and $\chi^s$ commutes with $\chi^t$ for all $s, t \in S$, then we say \emph{the product $\prod_{s \in S} \chi^s$ commutes}, or just that \emph{$\chi^S$ commutes}, and define
\[ \chi^S(x) = \lim_{F \Subset S} (\prod_{k \in F} \chi^k)(x) \]
by taking the pointwise limit, when this limit exists (note that $\prod$ means function composition here). We show that it always exists.

\begin{lemma}
\label{lem:PointwiseLimit}
On any subshift, if $\chi^K$ commutes, then the pointwise limit is well-defined, and convergence is uniform.
\end{lemma}

\begin{proof}
To see that the pointwise limit is well-defined, observe that in a finite subproduct $\chi^F$, $F \Subset K$, the value of $\chi^F(x)_g$ only depends on the value of finitely many elements of $G$, since we may order the product so that translates of $\chi$ that may change the value of $g$ (i.e.\ $g$ is in their strong neighborhood) are applied first. The same argument gives uniform convergence: the value at $g$ stabilizes once we have applied all translates of $\chi$ with $g$ in their strong neighborhood.
\end{proof}

\begin{lemma}
\label{lem:ExpHIsAuto}
Suppose that $X \subset \Sigma^G$ is a subshift, $K \leq G$ is a subgroup, $\chi$ is a gate on $X$ and $\chi^K$ commutes. Then $\chi^K$ is an automorphism of $(X,K)$.
\end{lemma}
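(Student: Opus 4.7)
The plan is to assemble the statement from the pieces already in place: Lemma~\ref{lem:PointwiseLimit} gives that $\chi^K$ exists as a uniform limit of the finite subproducts $\chi^F := \prod_{k \in F} \chi^k$ with $F \Subset K$, and Lemma~\ref{lem:UniformComposition} lets me pass composition through uniform limits. So the only work is (i) continuity and landing in $X$, (ii) invertibility, and (iii) $K$-equivariance.

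First I would observe that because $X$ is a $K$-subshift and $\chi : X \to X$, each translate $\chi^k = \sigma_{k^{-1}} \circ \chi \circ \sigma_k$ for $k \in K$ is a well-defined homeomorphism $X \to X$. Hence every finite subproduct $\chi^F$ is a homeomorphism $X \to X$, and by Lemma~\ref{lem:PointwiseLimit} the limit $\chi^K$ is a uniform limit of continuous $X \to X$ maps, so it is a continuous map $X \to X$ (using that $X$ is closed).

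Next, for invertibility, I would apply the same construction to $\chi^{-1}$, which is again a gate with the same strong neighborhood. Since $\chi^s$ commutes with $\chi^t$ for distinct $s,t\in K$, so do $(\chi^s)^{-1}=(\chi^{-1})^s$ and $(\chi^{-1})^t$; hence $(\chi^{-1})^K$ is well-defined and uniformly continuous. Taking finite $F\Subset K$, the commutation means $(\chi^F)^{-1} = \prod_{k\in F}(\chi^{-1})^k$, and both sides converge uniformly to $\chi^K$ and $(\chi^{-1})^K$ respectively. By Lemma~\ref{lem:UniformComposition}, $\chi^K \circ (\chi^{-1})^K$ is the uniform limit of $\chi^F \circ (\chi^F)^{-1} = \id$, hence equals $\id$; likewise on the other side. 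So $\chi^K$ is a homeomorphism with inverse $(\chi^{-1})^K$.

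Finally, for $K$-equivariance, fix $k \in K$. The composition rule $\sigma_a \circ \sigma_b = \sigma_{ab}$ gives
\[ \sigma_k \circ \chi^g \circ \sigma_{k^{-1}} = \sigma_{kg^{-1}} \circ \chi \circ \sigma_{gk^{-1}} = \chi^{gk^{-1}}. \]
Conjugating a finite subproduct by $\sigma_k$ thus yields
\[ \sigma_k \circ \chi^F \circ \sigma_{k^{-1}} \;=\; \prod_{s \in F} \chi^{sk^{-1}} \;=\; \chi^{Fk^{-1}}, \]
where the order is irrelevant by commutativity. Applying Lemma~\ref{lem:UniformComposition} twice (the outer $\sigma_k$ is uniformly continuous, and so is $\chi^K$), the left side converges uniformly to $\sigma_k \circ \chi^K \circ \sigma_{k^{-1}}$. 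Since $K$ is a subgroup, the map $F \mapsto Fk^{-1}$ is a cofinal bijection on finite subsets of $K$, so the right side converges to $\chi^K$. Thus $\sigma_k \chi^K \sigma_{k^{-1}} = \chi^K$, i.e., $\chi^K$ commutes with the $K$-action, completing the proof that $\chi^K \in \Aut(X,K)$.

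The only part that requires any care is the equivariance step, where one must (a) get the conjugation-by-shift identity $\chi^{gk^{-1}}$ with the correct exponent, and (b) notice that the reindexing $F \mapsto Fk^{-1}$ is cofinal precisely because $K$ is a subgroup — this is the place where the subgroup hypothesis is actually used, beyond merely making the finite products well-defined inside $X$.
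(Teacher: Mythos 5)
Your proposal is correct and follows essentially the same route as the paper: continuity via the uniform limit from Lemma~\ref{lem:PointwiseLimit}, the inverse identified as $(\chi^{-1})^K$ via Lemma~\ref{lem:UniformComposition}, and $K$-equivariance by pushing $\sigma_k$ through finite subproducts and reindexing $F \mapsto Fk^{-1}$ (the paper does the identical reindexing $F \mapsto Fh$, just written as one-sided composition rather than conjugation). Your extra detail on the inverse and on why each $\chi^k$ preserves $X$ only fills in steps the paper labels as clear.
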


\begin{proof}
To see that $\chi^K$ is a homeomorphism, observe that by Lemma~\ref{lem:PointwiseLimit} it is a uniform limit of continuous functions, thus continuous (alternatively, the proof shows this directly). It clearly has continuous inverse $(\chi^{-1})^K$ (it is easy to show that this product commutes when $\chi^K$ does), thus it is a homeomorphism.

We check commutation with $K$-shifts. If $h \in K$, then
\begin{align*}
\chi^K \circ \sigma_h &= \lim_{F \Subset K} (\prod_{k \in F} \chi^k) \circ \sigma_h \\
&= \lim_{F \Subset K} \prod_{k \in F} \sigma_{k^{-1}} \circ \chi \circ \sigma_k \circ \sigma_h \\
&= \lim_{F \Subset K} \prod_{k \in F} \sigma_h \circ \sigma_{(kh)^{-1}} \circ \chi \circ \sigma_{kh} \\
&= \lim_{F \Subset K} \prod_{k \in F} \sigma_h \circ \sigma_{k^{-1}} \circ \chi \circ \sigma_{k} \\
&= \sigma_h \circ \chi^K
\end{align*}
where uniform convergence of products means limits commute with composition (Lemma~\ref{lem:UniformComposition}), and the fourth equality holds because $Fh$ runs over the finite subsets of $K$ as $F$ does. 
\end{proof}

\begin{lemma}
\label{lem:LatticeBase}
Suppose $X \subset \Sigma^G$ is a subshift, $H \leq G$, and $\chi$ is a gate on $X$. If $\chi^{Hg'}$ commutes for some $g' \in G$, then for all $g \in G$, $\chi^{Hg}$ commutes. If $g' = e_G$, then $\chi^{Hg} = (\chi^H)^{g}$.
\end{lemma}

In words, the first claim of the lemma says that if $\chi^S$ commutes for one right coset $S$ of $H$, then it commutes for every right coset $S$ of $H$.

\begin{proof}
Let $S = Hg'$ so $Hg = Sk$ where $k = (g')^{-1} g$. Interpreting infinite products as pointwise uniform limits of finite products and applying Lemma~\ref{lem:UniformComposition} to pull functions out of the limit, the calculation
\[ \chi^{Sk} = \prod_{h \in S} \sigma_{k^{-1}h^{-1}} \circ \chi \circ \sigma_{hk} = (\prod_{h \in S} \sigma_h \circ \chi \circ \sigma_{h^{-1}})^k = (\chi^S)^k \]
shows the commutation of $\chi^{Sk}$. If $g' = e_G$, then we get further
\[ \chi^{Hg} = \chi^{Sk} = (\chi^S)^k = (\chi^H)^g \]
which is the last formula.
\end{proof}

\begin{example}
We note that the product $\chi^{gH}$ may not commute even if $\chi^H$ commutes. Suppose e.g.\ that $G = F_2 = \langle a, b \rangle$ (the free group on two free generators $a, b$), $X = \{0,1\}^G$, $H = \langle a \rangle$ and $\chi$ swaps the symbols at $\{b, ba\}$. Clearly $\chi^H$ commutes, but $\chi^{b^{-1}H}$ does not. \qee
\end{example}

\begin{definition}
\label{def:Gnets}
Let $X \subset \Sigma^G$ be a subshift on a residually finite group. Maps of the form $\chi^{Hg}$, where $\chi$ is a gate on $X$, $H \leq G$ is of finite index and $\chi^{Hg}$ commutes, are called \emph{gate lattices}, and they are \emph{even} when $\chi$ is. Write $\gnets$ for the group generated by gate lattices, and $\egnets$ for the group generated by even gate lattices.
\end{definition}

Note that ``$\gnets$'' is a fancy ``L'', and stands for ``lattice'', which refers to the fact gates are applied at the points of a (translated) lattice. As with $\gates$ (and $\egates$), the groups $\gnets$ and $\egnets$ depend on the subshift $X$, but this should not cause any confusion. 

\begin{lemma}
\label{lem:Decomposition}
Let $X$ be a subshift, and $\chi$ a gate on $X$. Suppose $\chi^{Hg}$ commutes, and $K \leq H$ is of finite index. Then $\chi^{Hg}$ can be written as a finite commuting product of commuting gates of the form $\chi^{Kg'}$.
\end{lemma}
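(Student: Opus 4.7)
The plan is to use the right-coset decomposition of $H$ by $K$. For the decomposition to yield a finite product as claimed, one needs $[H:K] < \infty$ (which is the intended setting — $K$ is a sub-lattice of $H$); write $H = \bigsqcup_{i=1}^n K h_i$ for chosen right coset representatives $h_1, \ldots, h_n$, so that $Hg = \bigsqcup_{i=1}^n Kh_ig$ as a disjoint union. The identity I aim to prove is
\[ \chi^{Hg} \;=\; \prod_{i=1}^n \chi^{K h_i g}, \]
together with the assertion that all $n$ factors on the right are well-defined commuting gate lattices.

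First I would verify well-definedness: each $\chi^{Kh_i g}$ is a commuting product because commutation of $\chi^{Hg}$ means $\chi^s$ and $\chi^t$ commute for all distinct $s,t \in Hg$, and this passes to the subset $Kh_ig \subseteq Hg$. By Lemma~\ref{lem:LatticeBase} this also identifies $\chi^{Kh_i g} = (\chi^K)^{h_i g}$ (after first observing $\chi^K$ commutes, again by restriction), so the factors are genuine gate lattices of the form $\chi^{Kg'}$ with $g' = h_i g$. Pairwise commutation of $\chi^{Kh_i g}$ and $\chi^{Kh_j g}$ for $i \neq j$ follows the same way: every $\chi^s$ with $s \in Kh_i g$ commutes with every $\chi^t$ with $t \in Kh_j g$, and this extends from finite subproducts to the infinite products via Lemma~\ref{lem:UniformComposition}, since each $\chi^{Kh_i g}$ is a uniform limit of its finite subproducts (Lemma~\ref{lem:PointwiseLimit}).

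Second I would prove the product identity by cofinality. The finite sets of the form $\bigcup_{i=1}^n F_i h_i g$ with $F_i \Subset K$ are cofinal in the directed set of finite subsets of $Hg$ ordered by inclusion. Along this cofinal subnet, by commutativity we may order each finite product as $\prod_{i=1}^n \prod_{k \in F_i} \chi^{k h_i g}$. The inner products converge uniformly to $\chi^{Kh_i g}$ as $F_i$ exhausts $K$, and $n$ successive applications of Lemma~\ref{lem:UniformComposition} (or a single application after collecting) then give uniform convergence of the composite to $\prod_{i=1}^n \chi^{Kh_i g}$. Since this subnet is cofinal, the limit equals $\chi^{Hg}$.

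The only real technical point is the passage from pairwise commutation of the generators $\chi^s$ to commutation of the infinite products $\chi^{Kh_ig}$, and the reordering/regrouping of the limiting product along the cofinal subnet; both are handled cleanly by Lemmas~\ref{lem:PointwiseLimit} and~\ref{lem:UniformComposition}, so I do not expect any substantive obstacle — the lemma is essentially a bookkeeping exercise on the coset partition.
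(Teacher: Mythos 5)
Your proposal is correct and follows essentially the same route as the paper: decompose $Hg$ into the right cosets $Kh_ig$ and regroup the commuting product, with the limit manipulations justified by Lemmas~\ref{lem:PointwiseLimit} and~\ref{lem:UniformComposition}. The paper's own proof is a one-line version of this; your explicit remark that finiteness of the product requires $[H:K]<\infty$ (implicit in the intended setting) is a reasonable clarification rather than a deviation.
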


\begin{proof}
Simply write $H = \bigcup_{h \in T} Kh$ for some set of representatives $T$, and observe that
\[ \prod_{h \in H} \chi^{hg} = \prod_{h \in T} \prod_{k \in K} \chi^{khg} = \prod_{h \in T} \chi^{Khg} \]
by the commutation of $\chi^{Hg}$.
\end{proof}

\begin{lemma}
\label{lem:NormalAuto}
If $X \subset \Sigma^G$ is a subshift, $\chi$ is a gate on $X$, and $H$ is a normal subgroup of $G$, then $\chi^{Hg}$ is an automorphism of $(X, H)$ whenever $\chi^H$ commutes.
\end{lemma}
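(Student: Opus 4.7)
The plan is to combine the two preceding lemmas and then leverage normality to upgrade $H$-equivariance through conjugation. By Lemma~\ref{lem:LatticeBase}, the product $\chi^{Hg}$ commutes and equals $(\chi^H)^g = \sigma_{g^{-1}} \circ \chi^H \circ \sigma_g$, and by Lemma~\ref{lem:ExpHIsAuto} (applied with $K = H$) we know that $\chi^H$ is an automorphism of $(X, H)$. Since $X$ is a $G$-subshift, $\sigma_g$ is a self-homeomorphism of $X$, so $(\chi^H)^g$ is automatically a homeomorphism of $X$. The only remaining content is to check commutation with the $H$-action.

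For that step, I would fix $h \in H$ and compute
\[ (\chi^H)^g \circ \sigma_h = \sigma_{g^{-1}} \circ \chi^H \circ \sigma_g \circ \sigma_h = \sigma_{g^{-1}} \circ \chi^H \circ \sigma_{gh}. \]
Now I would invoke normality: $gh = (ghg^{-1})g =: h'g$ with $h' \in H$, so $\sigma_{gh} = \sigma_{h'} \circ \sigma_g$. Since $\chi^H$ commutes with $\sigma_{h'}$ by Lemma~\ref{lem:ExpHIsAuto}, I can slide $\sigma_{h'}$ across and reassemble to obtain
\[ \sigma_{g^{-1}} \circ \sigma_{h'} \circ \chi^H \circ \sigma_g = \sigma_{g^{-1}h'} \circ \chi^H \circ \sigma_g = \sigma_h \circ \sigma_{g^{-1}} \circ \chi^H \circ \sigma_g = \sigma_h \circ (\chi^H)^g, \]
where the middle equality uses $g^{-1}h' = g^{-1}(ghg^{-1}) = hg^{-1}$.

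There is no real obstacle here; the argument is purely a bookkeeping computation. The only point requiring care is the left-versus-right convention for $\sigma_g$ (the paper uses $\sigma_g(x)_h = x_{hg}$, so $\sigma_g \circ \sigma_h = \sigma_{gh}$), and correspondingly ensuring that the element produced by normality is $ghg^{-1}$ rather than $g^{-1}hg$. Once the convention is tracked, normality of $H$ provides exactly the element of $H$ needed to absorb the extra $\sigma_g$ into the $H$-equivariance of $\chi^H$, which is the whole crux of why normality is needed in the hypothesis.
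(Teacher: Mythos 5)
Your proof is correct; every step of the bookkeeping checks out under the paper's convention $\sigma_g \circ \sigma_h = \sigma_{gh}$, and the identification $g^{-1}h' = hg^{-1}$ with $h' = ghg^{-1}$ is exactly right. The paper reaches the same conclusion by a slightly different (and shorter) use of normality: instead of writing $\chi^{Hg} = (\chi^H)^g$ and verifying $H$-equivariance of the conjugate by an element-level computation, it rewrites the index set itself, $Hg = gH$, so that $\chi^{Hg} = \chi^{gH} = (\chi^g)^H$; since $\chi^g$ is itself a gate and the product commutes (being literally the same set of commuting factors), Lemma~\ref{lem:ExpHIsAuto} applies directly to the gate $\chi^g$ over the subgroup $H$ and there is nothing left to check. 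Your version uses normality ``pointwise'' ($gh = h'g$) and re-derives equivariance from the already-established equivariance of $\chi^H$; the paper's version pushes the translation $g$ into the gate once and for all. Both are valid, and yours has the minor virtue of making explicit where the convention $\sigma_g(x)_h = x_{hg}$ matters, but the paper's reindexing trick is the more economical packaging of the same normality input.
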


\begin{proof}
We have $\chi^{H g} = \chi^{g H}$ by normality, and the latter product, equal to $(\chi^g)^H$, must commute since by  Lemma~\ref{lem:LatticeBase} $\chi^{H g}$ does, and $g H$ and $H g$ are literally the same subset of $G$ where $\chi$ gets applied. Now $\chi^{Hg} = (\chi^g)^H$ is an automorphism of $(X, H)$ by Lemma~\ref{lem:ExpHIsAuto}. 
\end{proof}

We recall a definition of Hartman-Kra-Schmieding (generalized to residually finite acting groups, as seems appropriate here).

\begin{definition}
\label{def:StabAut}
Let $G$ be a residually finite group, and $X \subset \Sigma^G$ a subshift. The \emph{stabilized automorphism group} $\SAut(X, G)$ is the union of $\Aut(X, H)$ where $H$ ranges over finite-index subgroups of $G$.
\end{definition}

By union we mean here simply the set-theoretic union. The union is increasing along the net of finite-index subgroups, so this is indeed a group. It is of course the same as the direct union of the groups $\Aut(X, H)$ along the same net. We sometimes call elements of the stabilized automorphism group \emph{stabilized automorphisms}.

The following is clear from the general Curtis-Hedlund-Lyndon theorem discussed above Lemma~\ref{lem:EFPIsEquivalent}, observing that stabilized automorphisms are automorphisms for the subaction of a finite-index subgroup (which is itself a subshift).

\begin{lemma}
\label{lem:AutBint}
Stabilized automorphisms of subshifts are bintinuous.
\end{lemma}

In principle, there is no need to restrict to residually finite groups in Definition~\ref{def:StabAut}, but in this case the group can be of rather different nature, e.g.\ on a group $G$ with no finite index subgroups $\SAut(X)$ would be just $\Aut(X)$. Our results also do not generalize to general groups, thus we make this assumption throughout.

We note that $\SAut(X)$ itself is not typically residually finite. Indeed, even if $X$ is a non-trivial mixing SFT on $\Z$, $\SAut(X)$ is never residually finite (although it is the union of the groups $\Aut(X, \sigma^n)$ which are residually finite).

\begin{lemma}
For any subshift $X$, the groups $\egnets$ and $\gnets$ are contained in the stabilized automorphism group of $X$. Indeed $\egnets \leq \gnets \leq \SAut(X)$.
\end{lemma}

\begin{proof}
Every finite-index subgroup $H$ of contains a normal finite-index subgroup $K \triangleleft G$. By Lemma~\ref{lem:Decomposition} we can write any $\chi^{Hg}$ as a composition of maps of the form $\chi^{Kg'}$, and by Lemma~\ref{lem:NormalAuto} these are automorphisms of $(X, K)$, thus in the stabilized automorphism group.
\end{proof}

We note that, by the proof of Lemma~\ref{lem:NormalAuto}, we could in principle eliminate the right translates in the definition of a gate, without changing the group of gate lattices.\footnote{While this simplifies the definition, sticking to such gate lattices would require us to change our gates to their shift conjugates after every application of Lemma~\ref{lem:Decomposition}, which would complicate many proofs.}

\begin{lemma}
\label{lem:NoTranslateA}
The group $\gnets$ is generated by gates of the form $\chi^K$ where $K$ is a normal finite-index subgroup. The same is true for $\egnets$.
\end{lemma}

\begin{proof}
By Lemma~\ref{lem:Decomposition}, elements of the form $\chi^{Kg}$ with $K$ normal are a generating set. As in the proof of Lemma~\ref{lem:NormalAuto}, we have $\chi^{Kg} = \chi^{gK}$. We have $\chi^{gK} = (\chi^g)^K$ by definition, and $\chi^g$ is a gate.
\end{proof}

\subsection{Lattice nets}
\label{sec:LatticeNets}

We do not need to take \emph{all} subgroups of finite index in the definitions of $\egnets, \gnets, \SAut$, for some of our results. In particular in our main result Theorem~\ref{thm:MainProof} we allow the subgroups $\mathcal{I}$ to be essentially arbitrary. This gives results not only for $\SAut(X)$, but for a large collection of its subgroups. Readers only interested in results for $\SAut(X)$ can simply always take $\mathcal{I}$ to be the set of all finite-index subgroups, and skip this section.

\begin{definition}
Let $\mathcal{I}$ be a set of finite-index subgroups of $G$. Ordering $\mathcal{I}$ by reverse inclusion, we also think of it as a directed set, and $I \mapsto I : \mathcal{I} \to \mathcal{I}$ can be seen as a net of subgroups. 
We say $\mathcal{I}$ is a \emph{lattice net} if it tends to the trivial subgroup in the Chaubauty topology, and for all subgroups $H \in \mathcal{I}$, there is also a normal subgroup $K \triangleleft G$ with $K \in \mathcal{I}$ such that $K \leq H$.
\end{definition}


Note that for a lattice net to even exist, $G$ has to be residually finite. A subset $\mathcal{J} \subset \mathcal{I}$ of a directed set $\mathcal{I}$ is \emph{cofinal} if for all $I \in \mathcal{I}$, $J \geq I$ for some $J \in \mathcal{J}$. The definition of a lattice net simply states that normal subgroups (that appear in the net) must be cofinal in the net; we do not, however, need that all normal subgroups of $G$ appear in the net. Since all our definitions will only depend on the cofinality class of the lattice net, by the last condition above we could equivalently restrict our lattice nets to contain only normal subgroups. The following is proved exactly like Lemma~\ref{lem:NoTranslateA}.

\begin{lemma}
\label{lem:NoTranslate}
For any lattice net $\mathcal{I}$, the group $\gnets_{\mathcal{I}}$ is generated by gates of the form $\chi^K$ where $K$ is a normal subgroup in $\mathcal{I}$. The same is true for $\egnets_{\mathcal{I}}$.
\end{lemma}

If $\mathcal{I}$ is a lattice net, we can generalize the stabilized automorphism group in an obvious way to $\SAut(X, \mathcal{I}) = \bigcup_{H \in \mathcal{I}} \Aut(X, H)$. We also define $\mathcal{I}$-gate lattices in the obvious way, as well as notations $\gnets_{\mathcal{I}}, \egnets_{\mathcal{I}}$. Note that of course $\gnets_{\mathcal{I}}, \egnets_{\mathcal{I}}$ are respectively subgroups of $\gnets, \egnets$, and the definitions of $\mathcal{I}$-less $\SAut, \gnets, \egnets$ correspond precisely to the case where we take $\mathcal{I}$ the set of all finite-index subgroups.

In the remainder of this section we show that considering general lattice nets indeed adds generality. For two lattice nets $\mathcal{J}, \mathcal{I}$, we say that $\mathcal{J}$ is \emph{far from cofinal} in $\mathcal{I}$ if for all $N \Subset G$, there exists $H \in \mathcal{I}$ such that for all $K \in \mathcal{J}$, there exists $k \in K$ such that $Nk \cap NH = \emptyset$. Note that for $N = \{e_G\}$, this is exactly the complement of cofinality.

\begin{proposition}
\label{prop:IsMoreGeneral}
Suppose $X \subset \Sigma^G$ is an MFP SFT and $\mathcal{I}$, $\mathcal{J}$ are two lattice nets. Then
\begin{itemize}
\item if $\mathcal{J}$ is cofinal in $\mathcal{I}$, then
\[ \SAut(X, \mathcal{I}) \leq \SAut(X, \mathcal{J}), \; \gnets_{\mathcal{I}} \leq \gnets_{\mathcal{J}}, \;\mbox{and} \; \egnets_{\mathcal{I}} \leq \egnets_{\mathcal{J}}; \]
\item if $\mathcal{J}$ is far from cofinal in $\mathcal{I}$, then
\[ \SAut(X, \mathcal{I}) \not\leq \SAut(X, \mathcal{J}), \; \gnets_{\mathcal{I}} \not\leq \gnets_{\mathcal{J}}, \; \mbox{and} \; \egnets_{\mathcal{I}} \not\leq \egnets_{\mathcal{J}}. \]
\end{itemize}
\end{proposition}

\begin{proof}
We first show the claims for $\egnets$. If $\mathcal{J}$ is cofinal in $\mathcal{I}$, then Lemma~\ref{lem:Decomposition} shows that $\egnets_{\mathcal{I}} \leq \egnets_{\mathcal{J}}$. (This is true for any subshift.)

Suppose now that $\egnets_{\mathcal{I}} \leq \egnets_{\mathcal{J}}$. Let $\chi$ be a nontrivial even gate on $X$ with some strong radius $N$. One clearly exists in an MFP SFT, simply exchange three fillings of a large thick annulus.

Consider any gate lattice $\chi^H \in \gnets_{\mathcal{I}}$ where $H \in \mathcal{I}$ is normal. If $\chi^H \in \gnets_{\mathcal{J}}$, then in particular by Lemma~\ref{lem:Decomposition} there is a finite-index normal subgroup $K \triangleleft G$ with $K \in \mathcal{J}$ such that
\[ \chi^H = \chi_1^{K k_1} \circ \cdots \circ \chi_n^{K k_n}. \]
where each $\chi_i^K$ commutes by Lemma~\ref{lem:LatticeBase}.

Now Lemma~\ref{lem:NormalAuto} shows that each $\chi_i^{K k_i}$ is an automorphism of $(X, K)$, so $\chi^H$ commutes with the $K$-action. Again by Lemma~\ref{lem:LatticeBase},
$(\chi^H)^k = \chi^{Hk} = \chi^{kH}$,
and we obtain that $\chi^H = \chi^{kH}$ whenever $k \in K$. In particular, $NkH \cap NH \neq \emptyset$ for all $k \in K$, since $\chi^{kH}$ performs a nontrivial rewrite of the contents of $NkH$ in its input configuration. This implies $Nk \cap NH \neq \emptyset$ for all $k \in K$. Since $H$ was arbitrary, this contradicts far-from-cofinality.

For $\gnets$, the same proof works verbatim. For $\SAut$, the proof is easier: $\SAut(X, \mathcal{I}) \leq \SAut(X, \mathcal{J})$ is immediate from the definition, and for the other direction we construct $\chi^H \in \SAut(X, \mathcal{I})$ exactly as above, and now $\chi^H$ commutes with the $K$-action for some $K \in \mathcal{J}$ directly by the definition of $\SAut(X, \mathcal{J})$.
\end{proof}

Alternatively, it should be possible to replace ``far from cofinality'' with some assumptions on symmetry-breaking possibilities of gates (or perhaps even remove the assumption completely). We show that the above at least takes care of $\Z^d$.

\begin{lemma}
\label{lem:Zdcofinality}
Let $\mathcal{I}, \mathcal{J}$ be lattice nets in $\Z^d$. Then either $\mathcal{J}$ is cofinal in $\mathcal{I}$, or $\mathcal{J}$ is far from cofinal in $\mathcal{I}$.
\end{lemma}

\begin{proof}
Suppose $H \in \mathcal{I}$ is such that no $K \in \mathcal{J}$ is contained in $H$. Let $N \Subset H$ be given, and suppose $N$ is contained in the Euclidean ball of radius $\lfloor r/2 \rfloor$. Refine $H$ (i.e.\ move further into $\mathcal{I}$) so that it contains no nonzero vector of length less than $4 r$, and let $K \in \mathcal{J}$ be arbitrary. Let $\vec v \in K$ be any vector not contained in $H$ (which exists by assumption).

Suppose that $d(\vec v, \vec u) = t \leq r$ for some $\vec u \in H$. Let $\vec w = \vec v - \vec u$ and observe that
\[ r \leq d(\vec v + \lceil r/t \rceil \vec w, \vec u) = t + \lceil r/t \rceil t \leq 3 r \]
so in fact $d(\vec v + \lceil r/t \rceil \vec w, H) \geq r$, using that no two vectors in $H$ are at distance less than $4r$.

The above shows that that $H + K$ contains a vector at distance $\geq r$ from $H$. Now if $\vec u \in H, \vec v \in K$ satisfy $d(h + k, H) \geq r$, then clearly also $d(k, H) \geq r$. It follows that $k+N \cap H+N = \emptyset$, proving far from cofinality.
\end{proof}

In the case of $\Z^d$, the previous proposition and lemma together show that $\mathcal{J}$ is cofinal in $\mathcal{I}$ if and only if $\gnets_{\mathcal{I}} \leq \gnets_{\mathcal{J}}$, and $\gnets_{\mathcal{I}} = \gnets_{\mathcal{J}}$ characterizes cofinal equivalence; same for $\egnets$ and $\SAut$. The group $\Z$ already has many non-cofinal pairs of lattice nets, for example the lattice nets $(p^n\Z)_{n \in \N}$ are pairwise non-cofinal for distinct primes $p$.

\subsection{Evenness of gate lattices}
\label{sec:AllEven}

\begin{lemma}
\label{lem:egnetsIsCommu}
If $X$ is an MFP SFT on a residually finite countably infinite group $G$, and $\mathcal{I}$ any lattice net, then $\egnets_{\mathcal{I}} = [\gnets_{\mathcal{I}}, \gnets_{\mathcal{I}}]$. 
\end{lemma}


\begin{proof}
First, let us show $\egnets_{\mathcal{I}} \subset [\gnets_{\mathcal{I}}, \gnets_{\mathcal{I}}]$. Let $\chi^{Hg} \in \egnets_{\mathcal{I}}$. By Lemma~\ref{lem:CommutatorCharacterization} we have $\chi = [\chi_1, \chi_2]$ for some gates $\chi_1, \chi_2$. Using Lemma~\ref{lem:Decomposition} and taking a sufficiently sparse subgroup in $\mathcal{I}$, we may assume $H$ is very sparse compared to the strong neighborhood of $\chi$ and those of the $\chi_i$. Then an easy calculation shows $\chi^{Hg} = [\chi_1^{Hg}, \chi_2^{Hg}]$. Namely the strong neighborhoods of different translates of $\chi$ and $\chi_i$ do not intersect, thus these translates commute, thus for finite $F \Subset H$ we have
\begin{align*}
\chi^{Fg} &= \prod_{h \in F} (\chi_1^{-1})^{hg} \circ (\chi_2^{-1})^{hg} \circ \chi_1^{hg} \circ \chi_2^{hg} \\
&= \prod_{h \in F} (\chi_1^{-1})^{hg} \circ \prod_{h \in F} (\chi_2^{-1})^{hg} \circ \prod_{h \in F} \chi_1^{hg} \circ \prod_{h \in F} \chi_2^{hg} \\
&= [\chi_1^{Fg}, \chi_2^{Fg}] \end{align*}
and the claim follows by taking the (uniformly converging) limits and applying Lemma~\ref{lem:UniformComposition}.

Next, we show $[\gnets_{\mathcal{I}}, \gnets_{\mathcal{I}}] \subset \egnets_{\mathcal{I}}$. The idea is to take a generic commutator, and use Lemma~\ref{lem:Decomposition} to decompose the commutatorands into very sparse gate lattices. (This is where we use the fact our net $\mathcal{I}$ contains arbitrarily sparse finite-index subgroups.) Standard commutator formulas allow us to express the original commutator as a composition of commutators of sparse gate lattices. A simple calculation then shows that a commutator of very sparse gate lattices is the same as applying a commutator of gates on a sparse lattice, the gist being that interactions between the commutatorands ``happen only in the intended locations''. We conclude by recalling that a commutator of gates is an even gate.

In detail, consider any commutator $[\chi_1^{Hh}, \chi_2^{Kk}]$, for some $h,k \in G$. Repeatedly using the standard formulas $[a, cb] = [a, b][a, c]^b$ and $[ac, b] = [a, b]^c [c, b]$, and Lemma~\ref{lem:Decomposition}, we can write $[\chi_1^{Hh}, \chi_2^{Kk}]$ as a composition of elements of the form $[\chi_1^{Lh'}, \chi_2^{Lk'}]^f$ where $f \in \gnets_{\mathcal{I}}$ and $L \in \mathcal{I}$ is a normal finite-index subgroup of $G$ which is very sparse compared to the radii of $\chi_1, \chi_2$. Specifically we want $L \cap N^{-1}NN^{-1}N = \{e_G\}$, where $N$ is a common strong neighborhood for $\chi_1, \chi_2$.


It suffices now to show that $[\chi_1^{Lh'}, \chi_2^{Lk'}] = [\chi_1^{h'L}, \chi_2^{k'L}] \in \egnets_{\mathcal{I}}$, since the group $\egnets_{\mathcal{I}}$ is normal. First, up to possibly changing the representatives $h', k'$, we may assume that either $Nh' \cap Nk' \neq \emptyset$, or $Nh'\ell \cap Nk'\ell' = \emptyset$ for any $\ell, \ell' \in L$. In the latter case, clearly $[\chi_1^{h'L}, \chi_2^{k'L}] = \id \in \egnets_{\mathcal{I}}$, since $\chi_1^{h'L}$ applies rewrites in the domain $Nh'L$, and $\chi_2^{k'L}$ applies rewrites in $Nk'L$.

Consider then the former case, $Nh' \cap Nk' \neq \emptyset$,
thus $h'(k')^{-1} \in N^{-1} N$. In this case, we must have $Nh' \cap Nk'' = \emptyset$ for all $k'' = k'\ell \in k'L \setminus \{k'\}$, namely otherwise $h'(k'')^{-1} \in N^{-1} N$, so $k''(k')^{-1} \in N^{-1} N N^{-1} N$. But we have $k''(k')^{-1} = k' \ell (k')^{-1} \in L$ (because $L$ is normal), contradicting the sparseness assumption on $L$.


By similar logic, since $N h' \ell \cap N k' \ell \neq \emptyset$ we have $N h' \ell \cap N k'' \ell \neq \emptyset$ for $k'' \in k'L \setminus \{k'\}$. It is now clear that $[\chi_1^{h'L}, \chi_2^{k'L}] = [\chi_1^{h'}, \chi_2^{k'}]^L$, since the only interaction between applications of $\chi_1$ and $\chi_2$ happen in the intersections $N h' \ell \cap N k' \ell$. Now finally note that $[\chi_1^{h'}, \chi_2^{k'}] \in \egates_{\mathcal{I}}$ by Lemma~\ref{lem:CommutatorCharacterization2}, so by definition $[\chi_1^{h'}, \chi_2^{k'}]^L \in \egnets_{\mathcal{I}}$, concluding the proof.
\end{proof}

We show that the difference between $\gnets_{\mathcal{I}}$ and $\egnets_{\mathcal{I}}$ is only about parity issues, as the names might suggest. Recall that a \emph{Boolean group}, also known as an \emph{elementary abelian $2$-group}, is a group where every element is an involution.

\begin{proposition}
\label{prop:AbelianizationTwo}
The abelianization of $\gnets_{\mathcal{I}}$ is a Boolean group.
\end{proposition}

\begin{proof}
It suffices to show that $\gnets$ admits a generating set of elements whose squares are in $[\gnets, \gnets]$. Consider any gate $\chi$. We have $\chi^2 \in \egnets$ because for any strong neighborhood $N$ of $\chi$, $\chi^2$ performs an even permutation in any $G \setminus N$-context (because the abelianization of a nontrivial finite symmetric group is $\Z_2$). If $Hg$ is sparse enough, then $(\chi^{Hg})^2 = (\chi^2)^{Hg}$, and thus $(\chi^{Hg})^2 \in \egnets$. Since $\egnets \subset [\gnets, \gnets]$ by Lemma~\ref{lem:egnetsIsCommu}, this concludes the proof.
\end{proof}

In the remainder of this section we find conditions under which $\egnets_{\mathcal{I}} = \gnets_{\mathcal{I}}$.

We first show that a full shift over an even alphabet over any residually finite group $G$ has this property. We in fact state some more general dynamical properties that imply it. First say a subshift has \emph{even fillings} if for some $F \Subset G$, we have that $|\follow(x, F)|$ is even for all $x \in X|G \setminus F$. It is clear that if this happens for some $F$, it happens for all larger $F$ and all right translates of $F$.

\begin{lemma}
\label{lem:EvenPerfect}
If $X$ is an MFP SFT with even fillings, then $\egnets_{\mathcal{I}} = [\gnets_{\mathcal{I}}, \gnets_{\mathcal{I}}] = \gnets_{\mathcal{I}}$ for any lattice net $\mathcal{I}$.
\end{lemma}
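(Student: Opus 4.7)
The plan is to establish the chain $\egnets_{\mathcal{I}} \subseteq [\gnets_{\mathcal{I}}, \gnets_{\mathcal{I}}] \subseteq \gnets_{\mathcal{I}} \subseteq \egnets_{\mathcal{I}}$. The middle inclusion is immediate, and the first is proved by repeating the argument of the preceding lemma: for any even gate lattice $\chi^{Hg}$ with $H \in \mathcal{I}$, I use Lemma~\ref{lem:Decomposition} together with the facts that $\mathcal{I}$ is downward-directed and tends to the trivial group in the Chabauty topology to refine to a sparser $K \leq H$ in $\mathcal{I}$, and then apply the commutator identity $\chi^{Kg'} = [\chi_1^{Kg'}, \chi_2^{Kg'}]$ from that proof.

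The main new content is $\gnets_{\mathcal{I}} \subseteq \egnets_{\mathcal{I}}$, which I propose to establish by proving that every gate is even under the hypotheses. Given a gate $\chi$ with strong neighborhood $N_\chi$ and local rule $\pi_{N_\chi}$, fix a witness $F_0$ of even fillings, choose a right translate $F' = F_0 g_0$ disjoint from $N_\chi$ (possible since $G$ is infinite), and enlarge the strong neighborhood of $\chi$ to $N' = N_\chi \sqcup F'$.

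For each $y \in X|(G \setminus N')$, the extended permutation $\pi_{N'}$ sends $(P, z) \mapsto (\pi_{N_\chi}(P), z)$, so it respects the fibration $\phi(P, z) = P$ over $\follow(y, N_\chi)$. The crucial point will be that the gate axiom -- that $\pi_{N_\chi}$ preserves every $\follow(y \sqcup z, N_\chi)$ -- forces the fiber $\follow(y \sqcup P, F')$ to be \emph{the same} subset of $X|F'$ for all $P$ lying in a common cycle of $\pi_{N_\chi}|_{\follow(y, N_\chi)}$. Consequently each cycle of length $r$ downstairs lifts to exactly $|\follow(y \sqcup P, F')|$ disjoint cycles of length $r$ upstairs, and by even fillings applied to $F'$ with context $y \sqcup P \in X|(G \setminus F')$ this multiplicity is even. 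Thus that cycle contributes $((-1)^{r-1})^{\text{even}} = +1$ to the sign, and multiplying over all cycles shows $\chi$ is an even gate with strong neighborhood $N'$.

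Finally, given any generator $\chi^{Hg}$ of $\gnets_{\mathcal{I}}$, I view $\chi$ with the enlarged even neighborhood $N'$ and decompose via Lemma~\ref{lem:Decomposition} over a sparser $K \leq H$ in $\mathcal{I}$ chosen so that the $N'$-translates at positions of $Kg$ are pairwise disjoint; this writes $\chi^{Hg}$ as a product of even gate lattices, placing it in $\egnets_{\mathcal{I}}$. The step requiring most care will be the fiber-constancy observation underlying the cycle count, which hinges on the precise form of the gate axiom; once that is in place, the sign calculation and the passage from gates to gate lattices are routine.
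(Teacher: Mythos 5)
Your proposal is correct and follows essentially the same route as the paper: enlarge the strong neighborhood of an arbitrary gate by a disjoint translate of the even-fillings witness set, observe that each cycle of the original permutation lifts to an even number of disjoint copies, and conclude that every gate (hence every gate lattice) is even, with the remaining inclusions handled exactly as in the preceding lemma. The only difference is that the paper separates the translate $Fg$ from $N$ by a window and gets fiber-constancy over the whole follower set, whereas you use only disjointness and the gate axiom to get fiber-constancy along each cycle --- a variant the paper itself notes suffices in a footnote.
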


\begin{proof}
If $\chi$ has strong neighborhood $N$, let $R$ be a window for $X$, let $F$ have even fillings, and replace the strong neighborhood of $\chi$ by $N \sqcup Fg$ such that $N \cap RFg = \emptyset$, acting trivially on the contents of $Fg$. Now consider any context $x \in X|G \setminus (N \cup Fg)$. Since applying $\chi$ does not affect the contents of $RFg$, the possible contents of $Fg$ only depends on $x$ before and after the application.\footnote{Since $\chi$ is well-defined with strong neighborhood $N$, it is not even necessary to assume $N \cap RFg = \emptyset$, only that $N \cap Fg = \emptyset$; but the extra leeway does not hurt.} Thus for each even cycle that $\chi$ performs, we perform an even number of independent copies of it when $\chi$ is seen as a permutation of $\follow(x, N \sqcup Fg)$, in particular this is an even permutation. It is clear that $\chi$ with the new neighborhood still commutes with its translates (since it is the same homeomorphism).
\end{proof}

A common way to get even fillings is that we literally have freely changeable bits visible on each configuration. More precisely, we say a subshift has \emph{syndetic free bits} if it is conjugate to a subshift $X$ over the disjoint union alphabet $A \cup (B \times \{0,1\})$ such that elements of $B \times \{0,1\}$ appear in every configuration, and if $x \in X$ and $x_g = (b, c) \in B \times \{0,1\}$, then also $y \in X$, where $y_h = x_h$ for $h \neq g$ and $y_g = (b, 1-c)$.

\begin{lemma}
\label{lem:FreeBitsPerfect}
If $X$ has syndetic free bits, then is has even fillings.
\end{lemma}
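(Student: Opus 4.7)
The plan is to exhibit a fixed-point-free involution on every set $\follow(x, F)$ for a sufficiently large $F$, whence the cardinality is even. I may work directly with a presentation in which $X \subset (A \cup (B \times \{0,1\}))^G$ realizes the free-bit structure, since the conclusion asks only for the existence of some $F \Subset G$.

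First I would use compactness to locate $F$. For each $g \in G$, the set $U_g = \{x \in X \;|\; x_g \in B \times \{0,1\}\}$ is clopen. By the assumption that elements of $B \times \{0,1\}$ appear in every configuration we have $X = \bigcup_{g \in G} U_g$, so by compactness there is a finite $F \Subset G$ such that $X = \bigcup_{g \in F} U_g$. Equivalently, every $x \in X$ has at least one coordinate in $F$ whose symbol lies in $B \times \{0,1\}$.

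Next, fix an arbitrary ordering $F = \{g_1, \dots, g_n\}$ and any context $x \in X|(G \setminus F)$. For each filling $y \in \follow(x, F)$, the configuration $x \sqcup y \in X$ by the choice of $F$ has some coordinate in $F$ whose symbol is a free bit; let $i(y)$ be the least index with $y_{g_{i(y)}} \in B \times \{0,1\}$. Define $\iota(y) \in \Sigma^F$ by flipping the bit at position $g_{i(y)}$ and leaving all other positions of $F$ unchanged. The free-bit property guarantees $x \sqcup \iota(y) \in X$, so $\iota(y) \in \follow(x, F)$. The flipped symbol is again in $B \times \{0,1\}$ (with the same $b$-component), so $i(\iota(y)) = i(y)$, making $\iota \circ \iota$ the identity; and $\iota(y) \neq y$ since they differ at $g_{i(y)}$. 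Thus $\iota$ is a fixed-point-free involution on $\follow(x, F)$, forcing $|\follow(x, F)|$ to be even.

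There is no substantive obstacle here; the only mildly delicate point is recognizing that the position of the free bit may depend on the filling $y$, which is handled by using the fixed ordering of $F$ to select a canonical such position. This is the standard trick that lets pointwise free choices be assembled into a global parity argument.
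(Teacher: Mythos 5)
Your proof is correct and follows the same route as the paper: compactness gives a finite $F$ meeting a free bit in every configuration, and flipping the first free bit under a fixed ordering of $F$ yields a fixed-point-free involution on $\follow(x,F)$, hence even cardinality. You have merely spelled out the well-definedness of the involution (that the selected index is stable under the flip) in more detail than the paper does.
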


\begin{proof}
A free involution on the fillings of a hole $F$ is obtained by flipping the first free bit under any ordering of $F$ (and a free involution implies even cardinality). The hole $F$ simply needs to be large enough so there is always a free bit. Indeed, since free bits appear in all configurations of $X$, by compactness they appear in any large enough finite pattern of any configuration, thus such $F$ exists.
\end{proof}

Next, we cover all full shifts, under a condition on the group. We say a group $G$ \emph{has halvable finite-index subgroups} if all of its finite index subgroups have a finite index subgroup with an even index, or in a formula
\[ \forall H \leq G: [G : H] < \infty \implies \exists K \leq H: [H : K] < \infty \wedge 2|[H : K]. \]


\begin{lemma}
\label{lem:HalvablePerfect}
If $G$ has halvable finite-index subgroups and $X = \Sigma^G$, then $\egnets = [\gnets, \gnets] = \gnets$.
\end{lemma}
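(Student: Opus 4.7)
The previous lemma already gives $\egnets \subseteq [\gnets,\gnets] \subseteq \gnets$, so the plan is to establish the reverse inclusion $\gnets \subseteq \egnets$ by exhibiting every generator $\chi^{Hg}$ of $\gnets$ as an even gate lattice. The strategy is to bundle one translate of $\chi$ from each coset of a sparse normal finite-index subgroup $K \triangleleft G$ contained in $H$ into a single compound gate $\tilde\chi$; since $[H:K]$ will be even, $\tilde\chi$ will act as an even number of copies of the same permutation on disjoint factors, and hence will be an even gate. Applying $\tilde\chi$ along $K$ should then recover $\chi^{Hg}$ on the nose.

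First I would invoke halvability to pick $K_0 \leq H$ with $[H:K_0]$ even, and then take $K$ to be any finite-index normal subgroup of $G$ contained in $K_0$, for instance the normal core of $K_0$ in $G$. The index $[H:K] = [H:K_0]\cdot[K_0:K]$ is then still even. Fixing a strong neighborhood $N$ for $\chi$, I would greedily choose a transversal $T$ of $K$ in $H$ with $t(t')^{-1} \notin N^{-1}N$ for all distinct $t,t' \in T$: at each step the forbidden subset of the current $K$-coset is finite, while the coset itself is infinite, so a valid rep exists. The compound $\tilde\chi = \prod_{t \in T} \chi^{tg}$ is then a commuting composition (as a sub-product of the commuting $\chi^{Hg}$), and by the choice of $T$ the strong neighborhoods $Ntg$ are pairwise disjoint, so $\tilde\chi$ is a gate with strong neighborhood $S = \bigsqcup_{t \in T} Ntg$.

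Evenness of $\tilde\chi$ then falls out from a standard sign computation: on $\Sigma^S \cong \prod_t \Sigma^{Ntg}$ the gate $\tilde\chi$ acts as the product permutation of $|T|$ copies of $\hat\chi$ on disjoint factors, whose sign is $\mathrm{sign}(\hat\chi)$ raised to an exponent divisible by $|T|$, hence $+1$ since $|T|$ is even. The identification $\tilde\chi^K = \chi^{Hg}$ follows by unpacking
\[ \tilde\chi^K = \prod_{k \in K} \prod_{t \in T} \chi^{tgk}, \]
whose application positions form $TgK$; normality of $K$ in $G$ yields $TgK = KTg = Hg$ and the matching partitions $\bigsqcup_{t} tgK = \bigsqcup_t Ktg$, while commutativity of the whole product is inherited from that of $\chi^{Hg}$. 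Thus $\chi^{Hg} = \tilde\chi^K \in \egnets$.

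The main technical obstacle is the choice of the transversal $T$ with prescribed pairwise separation; this is handled by the greedy argument, which crucially uses that the cosets of the finite-index $K$ in the infinite group $G$ are infinite. A related subtle point is that $K$ must be normal in all of $G$ (not merely in $H$) for the position equality $TgK = Hg$ to hold, since the rewriting relies on $gK = Kg$.
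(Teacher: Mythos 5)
Your proof is correct and follows essentially the same route as the paper's: both exploit the even index $[H:K]$ to bundle translates of $\chi$ into an even compound gate, using the interchangeability of the full shift's coordinates (the paper groups the coset representatives into pairs and writes each pair-bundle directly as a commutator via Lemma~\ref{lem:CommutatorCharacterization}, whereas you bundle all $[H:K]$ translates at once and close the loop via the containment $\egnets \subseteq [\gnets,\gnets] \subseteq \gnets$ from the preceding lemma). Your sign computation (exponent $|T|\cdot|\Sigma^N|^{|T|-1}$, even since $|T|$ is), the greedy choice of a separated transversal, and the bookkeeping $TgK = Hg$ are all sound.
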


\begin{proof}
It suffices to write any $\chi^{Hg}$ as a commutator, where $H$ is a sufficiently sparse normal finite index subgroup. Let $K \leq H$ have even index, take right coset representatives $k_1, \ldots, k_{2n} \in H$. By normality of $K$ we have $\chi^{Kk_i} = \chi^{k_iK}$ and we can see $\chi^{Hg}$ as a composition of maps $(\chi^{k_{2i+1}} \circ \chi^{k_{2i+2}})^{Kg}$.


Clearly $\chi^{k_{2i+1}}$ and $\chi^{k_{2i+2}}$ have the same parity in any annular context because the subshift's coordinates are interchangeable (we are on a full shift). Thus the parity of their composition is even in every context. By MFP and Lemma~\ref{lem:CommutatorCharacterization} we can write their product as a commutator.
\end{proof}

We give two examples of (classes of) groups with halvable finite-index subgroups.

\begin{lemma}
\label{lem:HalvableExamples}
The following groups have halvable finite-index subgroups:
\begin{itemize}
\item all finitely-generated infinite finite-dimensional matrix groups over commutative rings;
\item all infinite residually finite $2$-groups.
\end{itemize}
\end{lemma}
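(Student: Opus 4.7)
The plan is to treat the two bullet points separately; in each case I reduce to showing that the ambient group has \emph{some} finite-index subgroup of even index, and then observe that the hypothesis passes to finite-index subgroups so the same argument applies to any $H \leq G$ of finite index, giving halvability.

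For the second bullet, let $G$ be an infinite residually finite $2$-group and $H \leq G$ of finite index. First I would note that $H$ is itself infinite (a finite-index subgroup of an infinite group is infinite) and inherits both properties: every element of $H$ still has $2$-power order, and $H$ is residually finite because the intersections $H \cap K$ with finite-index normal $K \triangleleft G$ form a cofinal family of finite-index normal subgroups of $H$ with trivial intersection. Now pick any $h \in H \setminus \{1\}$; by residual finiteness of $H$ there exists a normal finite-index $K \triangleleft H$ with $h \notin K$, so $K \neq H$ and $H/K$ is a non-trivial finite group in which every element has $2$-power order, hence a non-trivial finite $2$-group. Thus $[H:K]$ is a positive power of $2$ and in particular even.

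For the first bullet I would invoke \cite{We87}, extracting from that reference the statement that every finitely-generated infinite finite-dimensional matrix group over a commutative ring admits a finite-index subgroup of even index. For an arbitrary finite-index $H \leq G$ I would then check that $H$ lies in the same class: it is contained in the same $\mathrm{GL}_n(R)$ as $G$, it is infinite since $G$ is and $[G:H] < \infty$, and it is finitely generated by Schreier's lemma. Applying the cited result to $H$ in place of $G$ produces a finite-index subgroup of $H$ of even index, as required. The main obstacle in the whole lemma is concentrated here: the second bullet is a short exercise, but the first genuinely rests on Wehrfritz's theorem. A self-contained proof would need to exhibit, for a finitely-generated infinite linear group over a commutative ring, a non-trivial finite quotient of even order---typically by reducing modulo a suitable prime ideal of the $\mathbb{Z}$-algebra generated by the matrix entries of a finite generating set and analyzing the resulting Zariski closure---and I would not attempt to reprove this here but simply cite \cite{We87}.
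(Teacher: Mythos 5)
Your proof is correct and follows essentially the same route as the paper: cite \cite{We87} for the linear case, and for $2$-groups reduce to exhibiting one even-index finite-index subgroup by noting that a nontrivial finite quotient of a $2$-group is a nontrivial finite $2$-group, whose order is even by Lagrange. Your version is in fact slightly more careful than the paper's, which takes ``any proper normal subgroup'' and implicitly assumes finite index, whereas you obtain a proper normal finite-index subgroup explicitly from residual finiteness.
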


\begin{proof}
The first item is a special case of a more general result in \cite{We87}. For the latter, it is clearly enough to prove that every infinite residually finite $2$-group has a subgroup of even finite index. Take any proper normal finite-index subgroup $N \triangleleft G$ and consider the group $G/N$. The order of any nontrivial element $gN$ is a power of $2$, so by Lagrange's theorem the order $[G : N]$ of $G/N$ is even.
\end{proof}

For a similar reason as the second item in the previous lemma, a $p$-group with $p \neq 2$ will never have halvable finite-index subgroups.

Of course one could generalize Lemma~\ref{lem:HalvablePerfect} to $\gnets_{\mathcal{I}}$, namely simply take it as a property of a lattice net $\mathcal{I}$ that any $H \in \mathcal{I}$ has an even index subgroup $K \in \mathcal{I}$. Note that even $\Z$ does not have this property for all lattice nets $\mathcal{I}$.

In Lemma~\ref{lem:HalvablePerfect}, we could also replace the condition of being a full shift with a weaker statement, namely that that number of connecting patterns between an annular pattern and another pattern positioned in two different fixed areas inside it have the same parity (with suitable quantifiers). We omit a precise statement, but in the case of $\Z$ we can use this idea to generalize the statement $\egnets = \gnets$ to all mixing SFTs.

\begin{lemma}
\label{lem:ZPerfect}
If $G = \Z$ and $X$ is a mixing SFT, then $\egnets = [\gnets, \gnets] = \gnets$.
\end{lemma}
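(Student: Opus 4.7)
The inclusions $\egnets \subseteq [\gnets,\gnets] \subseteq \gnets$ hold on any MFP SFT, so the task is $\gnets \subseteq \egnets$: every gate lattice $\chi^{Hg}$ with $H = d\Z$ lies in $\egnets$. By Lemma~\ref{lem:Decomposition} we may replace $H$ by any finite-index subgroup of itself and translate, so it is enough to show $\chi^{n\Z} \in \egnets$ for any gate $\chi$ and any spacing $n$ of our choosing (to be picked large and divisible).

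Present $X$ as an edge shift with primitive transition matrix $M$ on vertex set $V$. The reductions $\overline{M}^k$ modulo $2$ live in the finite monoid of $|V| \times |V|$ matrices over $\mathbb{F}_2$, so the sequence is eventually periodic: fix $R_0$ and $T$ with $\overline{M}^{R+kT} = \overline{M}^{R}$ for all $R \geq R_0$ and $k \geq 0$. Enlarge the strong neighborhood of $\chi$ to $[-R,R]$ with $R := \max(r,R_0)$ (acting trivially on the added cells), and take $n$ to be a multiple of $T$ with $n > 2R$. Then, imitating Lemma~\ref{lem:HalvablePerfect}, $\chi^{n\Z} = \xi^{2n\Z}$, where $\xi := \chi \circ \chi^n$ is the paired gate with strong neighborhood $[-R, n+R]$.

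We claim $\xi$ is even. Given a context outside $[-R,n+R]$, let $u$ be the source vertex of the edge at $-R$ and $w$ the target vertex of the edge at $n+R$; the fillings are then the paths of length $n+2R+1$ from $u$ to $w$. Parameterizing such a path by the endpoints $(a_0,b_0)$ of the edge at $0$ and $(a_n,b_n)$ of the edge at $n$, and using that $\xi$ fixes every other coordinate together with the rule that a permutation of one factor $A$ of $A \times B$ has sign raised to the power $|B|$, a direct computation gives
\[ \operatorname{sgn}(\xi)\big|_{(u,w)} \equiv (PEQ + QEP)_{u,w} \pmod 2, \]
where $P := \overline{M}^R$, $Q := \overline{M}^{n+R}$, and $E_{a,b} \in \mathbb{F}_2$ is the parity of $\chi$ as a permutation of the $M_{a,b}$ edges from $a$ to $b$ (well-defined, since $\chi$'s local rule must preserve the source and target of each edge). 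By the choice of $R$ and $n$ we have $P = Q$, whence $PEQ = QEP$ and the right-hand side vanishes.

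Thus $\chi^{n\Z} = \xi^{2n\Z}$ is an even gate lattice, and tracing back through the Decomposition Lemma gives $\chi^{d\Z} \in \egnets$ for arbitrary $d$. The main obstacle is exactly the parity identity displayed above: on a full shift the multiplicities are all powers of $|\Sigma|$ and the two parities cancel on the nose (as in Lemma~\ref{lem:HalvablePerfect}), while on a general mixing SFT one must exploit the eventual periodicity of $\overline M$ over $\mathbb{F}_2$ to engineer the symmetry $P = Q$ that makes the halving trick produce an even gate.
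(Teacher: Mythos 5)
Your proof is correct and follows essentially the same route as the paper's: pair the gate with a single translate of itself, enlarge the strong neighborhood, and use the eventual periodicity of $M$ reduced over $\mathbb{F}_2$ to force the two boundary path-count matrices to coincide, so that the parities of the two copies cancel and the paired gate is even. The only bookkeeping slip is that for a gate whose actual strong neighborhood has half-width $r>0$, the matrix $E$ should record the parities of the permutations of paths \emph{across that whole neighborhood} (indexed by its two boundary vertices), which shifts the exponents to $R-r$ and $n+R-r$ and means you should take $R = r+R_0$ rather than $\max(r,R_0)$; the periodicity argument is unaffected.
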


\begin{proof}
It clearly suffices to show that, if $\chi$ is an arbitrary permutation with strong neighborhood $[0,n-1]$, then $\chi^{n\Z}$ is a composition of even gate lattices. We may assume $X$ is represented as an edge shift, so elements of $\Z$ carry edges of a finite directed graph and there are vertices between them at elements of $\Z+\frac12$; let $M$ be the matrix with $M_{a,b}$ the number of edges from vertex $a$ to vertex $b$. Note that the permutation $\pi$ that $\chi$ performs in $[0,n-1]$ fixes the vertices at $-\frac12$ and $n-\frac12$. Write $p_{a,b}$ for the parity of the restriction of $\pi$ to the context where the vertices at $(-\frac12, n-\frac12)$ are respectively $(a,b)$.

Next let $\hat M$ be the matrix obtained by taking entries of $M$ modulo $2$, and observe that $\hat M^n = \widehat{M^n}$. Observe that powers of $\hat M^n$ eventually get into a cycle, meaning $\hat M^{mn} = \hat M^{(m+p)n}$ for some $m \geq 0, p \geq 1$. Now consider the commuting product $\chi \circ \chi^{\sigma_{pn}}$. We claim that if we use the strong neighborhood $[-mn, mn+pn-1]$, this is an even permutation.

To see this, let $a,b \in V$ where $V$ is the set of vertices, and consider the permutation $\chi$ with vertices $(a,b)$ at $(-mn-\frac12, mn+pn-\frac12)$. For $c,d \in V$, for every choice of path from $a$ to $c$ and from $d$ to $b$, counting modulo $2$, $\chi$ has $p_{c,d}$ cycles of even length. If the number of paths from $a$ to $c$ is even or the number of paths from $d$ to $b$ is even, these cancel out, so the parity is just the parity of then number of triples $(a,c,d,b)$ such that $\hat M^{mn}_{a,d} = p_{c,d} = M^{(m+p)n}_{d,b} = 1$. The same calculation holds for $\chi^{\sigma_{pn}}$, so their composition performs an even permutation in the context $(a,b)$.

We conclude that $\chi \circ \chi^{\sigma_{pn}}$ with a suitable choice of strong neighborhood is even, and clearly the choice of strong neighborhood does not affect the commutation of the product $(\chi \circ \chi^{\sigma_{pn}})^{2np\Z} = \chi^{np\Z}$.

We can apply $\chi$ in a similar paired-up way on other cosets of $pn\Z$. Specifically we observe that
\[ (\chi \circ \chi^{\sigma_{pn}})^{\sigma_{in}} = \chi^{\sigma_{in}} \circ \chi^{\sigma_{(p + i)n}} \]
is also even, and define $f_i = (\chi^{\sigma_{in}} \circ \chi^{\sigma_{(p + i)n}})^{2np\Z} = \chi^{(n + i)p\Z}$.
Now $\chi^{n\Z} = \prod_{i = 0}^n f_i$. We get $\gnets \subset \egnets$. Since $\egnets \subset [\gnets, \gnets] \subset \gnets$, we conclude that the three groups are equal.
\end{proof}


\subsection{Inertness}
\label{sec:Inertness}

In the case $G = \Z$, recall from the introduction the notion of inertness of an automorphism of a mixing SFT, meaning an automorphism that acts trivially on Krieger's dimension group. 
We will mostly need a result of Wagoner \cite{Wa90a}, namely Lemma~\ref{lem:Wagoner} below. To state it, we need a few definitions.

Recall again that an edge shift is the set of paths $p : \Z \to E$ (with matching endpoints for successive edges) where $(V, E)$ is a directed (multi-)graph (with loops). A \emph{simple graph symmetry} is an automorphism of an edge shift which is defined (as a $1$-block code) by a bijection $\pi : E \to E$ that preserves the tails and heads of all vertices. If $X$ is an SFT, an automorphism $f \in \Aut(X, \sigma)$ is \emph{simple} if there exists a topological conjugacy between $(X, \sigma)$ and an edge shift, by which $f$ is conjugated to a simple graph symmetry. This definition is due to Nasu \cite{Na88}. The relevant result of Wagoner is the following:

\begin{lemma}
\label{lem:Wagoner}
If $f$ is an inert automorphism of a one-dimensional mixing SFT $(X, \sigma)$, then there exists $m \in \Z_+$ such that for all $n \geq m$, $f$ can be written as a product of simple automorphisms of $\Aut(X, \sigma^n)$.
\end{lemma}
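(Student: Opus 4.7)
My plan is to follow Wagoner's strategy, using Krieger's dimension group as the bookkeeping device for inertness. Recall that on a mixing edge shift $X = X_M$, the dimension group is the direct limit $\Delta(X) = \varinjlim(\Z^V \xrightarrow{M} \Z^V \xrightarrow{M} \cdots)$, and $f \in \Aut(X,\sigma)$ is inert iff it acts as the identity on $\Delta(X)$. A simple automorphism permutes edges while fixing tails and heads, so it fixes $\Z^V$ pointwise and a fortiori acts trivially on $\Delta(X)$; the same is true of any simple automorphism of $(X,\sigma^n)$ viewed inside $\Aut(X,\sigma^n)$, and the dimension group is preserved under passage to $\sigma^n$ (with matrix $M^n$). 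So the ``trivial'' direction -- that a product of simple automorphisms of $\Aut(X,\sigma^n)$ is inert in $\Aut(X,\sigma)$ -- is immediate, and the task is the converse.

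For the nontrivial direction I would feed $f$ through Williams' theorem: any automorphism of $(X,\sigma)$ corresponds to a self-strong-shift-equivalence of $M$, which factors as a finite chain of elementary strong shift equivalences $M = R_1 S_1,\ S_1 R_1 = M_1,\ M_1 = R_2 S_2,\ \ldots$, interleaved with simple graph symmetries at the intermediate edge presentations. Unpacking this in terms of edge shifts realizes $f$ as a composition of state splittings, state amalgamations, and simple graph symmetries. The invariants of the intermediate stages live in the dimension group, and the crucial input -- Wagoner's simplicial complex $RS(M)$ of Markov partitions -- has $\pi_1$ (or rather the automorphisms-modulo-homotopy) parametrized by the dimension representation. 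Thus inertness of $f$ means the loop it traces out in $RS(M)$ is null-homotopic, so there is a finite \emph{triangle identity} certificate witnessing that the non-simple elementary equivalences used to build $f$ cancel out.

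After applying this null-homotopy, I would pass to the power $\sigma^n$ with $n$ large enough that all intermediate refinements used along the homotopy are visible as block codes on $X_{M^n}$. Concretely, $n$ must be at least the maximum block length appearing in all the state splittings involved. On this common refinement, each of the canceled state-splitting/amalgamation pairs collapses to the identity, and the remaining moves are simple graph symmetries of the refined graph, i.e.\ elements of $\Aut(X,\sigma^n)$ that are simple in Nasu's sense.

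The main obstacle is uniformity in $n$: the lemma claims simplicity not just for one $n$ but for all $n \geq m$. Once a simple-symmetry decomposition exists on some $X_{M^n}$, the higher block recoding $X_{M^n} \to X_{M^{kn}}$ sends edges to $k$-blocks of edges, and a bijection on edges preserving heads and tails lifts to a bijection on $k$-blocks preserving the first tail and last head -- that is, to a simple graph symmetry of $X_{M^{kn}}$. Verifying that this lift commutes with compositions (so that the whole product, not just each factor, stays simple after refinement) is where the bookkeeping of the triangle identity is essential, and I would expect this compatibility to be the most delicate step, essentially because one must align the many intermediate refinements simultaneously rather than one at a time.
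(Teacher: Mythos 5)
The paper does not actually prove Lemma~\ref{lem:Wagoner}: it is imported as a black box, with the citation \cite{Wa90a} standing in for the argument (``we omit the general definition of inertness, and only use a result of Wagoner''). So there is no internal proof to compare yours against; what you have written is an attempted reconstruction of Wagoner's own proof. At the level of identifying the machinery your outline is pointed in the right direction --- the dimension representation as the obstruction, Williams' decomposition into elementary strong shift equivalences, and Wagoner's complex of Markov partitions with its triangle identities, where inertness becomes null-homotopy of the loop attached to $f$. Note, though, that your first paragraph establishes the \emph{converse} implication (simple $\Rightarrow$ inert), which is not part of the statement; the paper proves that direction separately and elementarily in Lemma~\ref{lem:1DCase}, (2) $\implies$ (1), via the action on rays.

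As a proof, the sketch has a concrete gap beyond the admitted hand-waving about the null-homotopy bookkeeping: your mechanism for upgrading ``for some $n$'' to ``for all $n \geq m$'' does not work as stated. Higher-block recoding $X_{M^n} \to X_{M^{kn}}$ only transports the simple-symmetry decomposition to the \emph{multiples} $kn$ of the initial exponent, and the set of multiples of a fixed $n$ is not cofinite in $\Z_+$, so this does not reach every sufficiently large exponent. To get the statement as written one must extract a uniform bound from the triangle-identity certificate that works for each individual large $n$, not just along a subsequence. (It happens that the paper's only application, in the proof of (1) $\implies$ (2) of Lemma~\ref{lem:1DCase}, applies Wagoner's result to $(X,\sigma^n)$ and so only uses powers of the form $\sigma^{nk}$; but that is a property of the application, not a repair of the proof of the lemma as stated.) Given that the paper itself treats this as a citation, the honest options are to do likewise, or to commit to genuinely reproducing Wagoner's argument including this uniformity step.
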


\begin{lemma}
\label{lem:1DCase}
Let $X$ be a one-dimensional mixing SFT. Then the following are equivalent:
\begin{enumerate}
\item $f$ is an inert automorphism of $(X, \sigma^n)$ for some $n \in \Z_+$;
\item $f$ is a product of simple automorphisms of $(X, \sigma^n)$ for some $n \in \Z_+$;
\item $f$ is a product of gate lattices on $X$;
\item $f$ is a product of even gate lattices on $X$.
\end{enumerate}
\end{lemma}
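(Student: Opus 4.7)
The plan is to establish the pairwise equivalences $(1) \Leftrightarrow (2)$, $(2) \Leftrightarrow (3)$, and $(3) \Leftrightarrow (4)$, which close the chain. The equivalence $(3) \Leftrightarrow (4)$ is immediate: $(4) \Rightarrow (3)$ is trivial, and $(3) \Rightarrow (4)$ is Lemma~\ref{lem:ZPerfect} ($\egnets = \gnets$ for mixing one-dimensional SFTs). For $(1) \Leftrightarrow (2)$, the forward direction comes from applying Wagoner's Lemma~\ref{lem:Wagoner} to each inert factor $f_i$ of $f$, and then combining the resulting factorizations via a common multiple $m$ of the periods $m_i$; here one uses that a simple graph symmetry of the $m_i$-path edge shift extends coordinatewise to a simple graph symmetry of the $m$-path edge shift whenever $m_i \mid m$, so simpleness survives the period refinement. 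The reverse direction is the well-known fact that a simple graph symmetry acts trivially on Krieger's dimension group, since it permutes edges while preserving all tails and heads and hence fixes the transition matrix acting on the vertex $\Z$-module; so each simple factor is inert and a product of inerts is inert.

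The substantive content is $(2) \Leftrightarrow (3)$. For $(2) \Rightarrow (3)$: if $f$ is simple in $\Aut(X, \sigma^n)$, witnessed by a conjugacy $\phi : (X, \sigma^n) \to (Y, \tau)$ to an edge shift $Y$ and graph symmetry $\tilde f$ of $Y$, write $\tilde f = \tilde\chi^\Z$ where $\tilde\chi$ is the $Y$-gate of strong neighborhood $\{0\}$ that performs the edge permutation at position $0$. The $\sigma^n$-equivariance of $\phi$ yields $f = (\phi^{-1}\tilde\chi\phi)^{n\Z}$, and since $\phi^{\pm 1}$ are block codes of bounded radius and $\tilde\chi$ modifies only one coordinate of $Y$, the conjugate $\phi^{-1}\tilde\chi\phi$ has a bounded $X$-strong neighborhood, so it is a gate and $f$ is a gate lattice. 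For $(3) \Rightarrow (2)$: conjugating if necessary, assume $X = X_M$ is already an edge shift for a primitive $M$. Given a gate lattice $\chi^{Hg}$ with $H = n\Z$ and strong neighborhood of $\chi$ contained in $[a, b]$, pick a multiple $m$ of $n$ with $m \geq b - a + 1$, and apply Lemma~\ref{lem:Decomposition} to write $\chi^{Hg} = \prod_{i=0}^{m/n - 1} (\chi^{m\Z})^{ni + g}$. Each $\chi^{m\Z}$ acts independently on each $m$-block $[km, km + m - 1]$ of $X$; because $\chi$ must preserve the tail vertex of its leftmost affected edge and the head vertex of its rightmost one in order to produce a valid element of $X_M$, its induced action on each $m$-path (= edge of the graph of $M^m$) preserves the tail of the first edge and the head of the last. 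Hence $\chi^{m\Z}$ is a simple graph symmetry of the edge shift $X_{M^m} \cong (X, \sigma^m)$, and conjugation by $\sigma_{ni+g}$ preserves simpleness in $\Aut(X, \sigma^m)$.

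The main obstacle is the $(3) \Rightarrow (2)$ direction: one must set up the $m$-block structure of the edge shift $X_{M^m}$ carefully so that each gate-lattice factor lives in a single $m$-block and its induced local permutation genuinely respects the graph structure on $m$-paths, and one must keep the additive/multiplicative notation for $\Z$ and for right cosets $Hg$ straight throughout. Combining factors from several gate lattices with different base periods $n_j$ into a single common period $m$ uses the same edgewise extension of graph symmetries as in $(1) \Rightarrow (2)$; everything else reduces to a mechanical application of Lemmas~\ref{lem:Decomposition}, \ref{lem:ZPerfect}, and~\ref{lem:Wagoner}, together with the classical identification of simple automorphisms as inert.
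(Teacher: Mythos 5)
Your proposal is correct and follows essentially the same route as the paper: Wagoner's lemma plus triviality of simple graph symmetries on the dimension group for $(1)\Leftrightarrow(2)$, the correspondence between simple graph symmetries of a higher-power edge-shift presentation and gate lattices for $(2)\Leftrightarrow(3)$, and Lemma~\ref{lem:ZPerfect} for $(3)\Leftrightarrow(4)$. The only (harmless) divergences are cosmetic: you handle the aggregation of several factors into a common power $\sigma^m$ more explicitly than the paper does, and in $(3)\Rightarrow(2)$ you derive preservation of the boundary vertices of each $m$-block from the validity of the image configuration, whereas the paper instead reserves a margin of width $r$ (a window size) at the block boundaries that the gate is not allowed to touch.
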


\begin{proof}
(1) $\implies$ (2): If $f$ is inert for some $(X, \sigma^n)$, then because $(X, \sigma^n)$ is itself topologically conjugate to a mixing SFT, by Wagoner's result it is a product of simple automorphisms of higher powers of $\sigma^n$.

(2) $\implies$ (1): This is obvious from the definition of the dimension group through rays and beams \cite[§7.5]{LiMa95}. Namely, if we represent $X$ as the edge shift where the automorphism is a simple graph symmetry, the action on rays is clearly the identity map, so simple automorphism are inert. Conjugating an inert automorphism back to $X$ preserves inertness. We conclude by recalling that the inert automorphism form a group, as they are the kernel of a homomorphism.

(2) $\implies$ (3): If $f$ is a simple automorphism of $\sigma^n$, then the edge permutations are obviously commuting gates, and thus $f$ is directly a gate lattice on the subgroup $n\Z$ (note that looking through a topological conjugacy of course does not change the set of gates, nor affect the commutation of their translates).

(3) $\implies$ (2): We simply need to show that if $\chi^{n\Z}$ is a gate lattice, then it can be written as a product of simple automorphisms. This is straightforward from Lemma~\ref{lem:Decomposition}, namely we can write $n\Z$ as a union of sparser translated subgroups $jn + mn\Z$, and it is easy to see that for large enough $m$, $\chi^{jn+mn\Z}$ is a simple automorphism of $(X, \sigma^{mn\Z})$: One can take the vertices to represent words of length $r$ around the positions $j + \lfloor mn/2 \rfloor + kmn$, where $[0,r-1]$ is a window for $X$. For large $m$, these words are not modified by $\chi^{j+mn\Z}$, and indeed if the strong neighborhood of $\chi$ is contained in $[-mn/2+r, mn/2-r]$ then $\chi$ can be seen as a permutation of the edges.

The equivalence of (3) and (4) is Lemma~\ref{lem:ZPerfect}.
\end{proof}

If $G = \Z$ and $X$ is a mixing SFT, the \emph{stabilized inert automorphism group} is the smallest group containing the inert automorphisms of the mixing SFTs $(X, \sigma^n)$, for all $n \geq 1$. The following is immediate from Lemma~\ref{lem:1DCase}.

\begin{proposition}
\label{prop:InertIsEgnets}
The stabilized inert automorphism group of a topologically mixing $\Z$-SFT is equal to its group $\egnets$.
\end{proposition}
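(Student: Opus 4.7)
The plan is to deduce both inclusions of the equality directly from the equivalence (1) $\iff$ (4) of Lemma~\ref{lem:1DCase}, with essentially no further work. Unpacking the definition, the stabilized inert automorphism group is the subgroup generated by the union $\bigcup_{n \geq 1} \mathrm{Inert}(X, \sigma^n)$, so a typical element $f$ of it has the form $f = g_1 \circ \cdots \circ g_k$ where each $g_i$ is an inert automorphism of some $(X, \sigma^{n_i})$ (possibly with the $n_i$ differing between factors; inverses of inert automorphisms are inert, so they need no separate treatment).

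For the inclusion $\egnets \subseteq \text{stab.\ inert}$, take $f \in \egnets$. By definition of $\egnets$, $f$ is a product of even gate lattices on $X$, which is exactly condition (4) of Lemma~\ref{lem:1DCase}. The implication (4) $\implies$ (1) from that lemma then exhibits $f$ as a product of inert automorphisms of $(X, \sigma^n)$ for a single $n$, which in particular lies in the stabilized inert automorphism group.

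For the reverse inclusion $\text{stab.\ inert} \subseteq \egnets$, write $f = g_1 \circ \cdots \circ g_k$ as above. Each individual $g_i$ is trivially a product (of length one) of inert automorphisms of $(X, \sigma^{n_i})$, so $g_i$ satisfies condition (1) of Lemma~\ref{lem:1DCase} with $n = n_i$. Applying (1) $\implies$ (4) to each factor separately yields $g_i \in \egnets$, hence $f \in \egnets$.

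There is no real obstacle to overcome: Lemma~\ref{lem:1DCase}, together with Wagoner's theorem (Lemma~\ref{lem:Wagoner}) and Lemma~\ref{lem:ZPerfect} that feed into it, has done all the substantive work. The only small point worth highlighting is that by processing the product $g_1 \circ \cdots \circ g_k$ factor-by-factor, we avoid having to merge inert automorphisms of different $(X, \sigma^{n_i})$ into inert automorphisms of a common $(X, \sigma^{\mathrm{lcm}(n_i)})$ -- a step that would require checking that inertness is preserved under passing to a subaction, whereas our argument sidesteps the question entirely.
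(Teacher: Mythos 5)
Your proposal is correct and matches the paper, which simply declares the proposition ``immediate from Lemma~\ref{lem:1DCase}'': both inclusions are read off from the equivalence (1) $\iff$ (4), exactly as you do. Your factor-by-factor treatment of a general element of the stabilized inert group is the right way to make ``immediate'' precise, and your closing remark about avoiding the passage to a common power $\sigma^{\lcm(n_i)}$ is a sensible observation, though not needed.
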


\section{Proof of main results}

We now prove the main result, Theorem~\ref{thm:MainProof} (which is our main result from the introduction, generalized to allow general lattice nets). 
The structure of the proof of simplicity and monolithicity of $\egnets$ is the following:
\begin{enumerate}
\item We commutator an arbitrary element of $\egnets$ (or more generally a homeomorphism with ntinuous inverse) with a suitably chosen element of $\egates$ to obtain a non-trivial map (first two lemmas).
\item We observe that applying the same on a sufficiently sparse lattice gives us a nontrivial gate lattice (Lemma~\ref{lem:KinCommutator}).
\item We observe that once we have one gate on sparse enough lattices, by EFP we have all of them on sparse enough lattices (done in the proof of Lemma~\ref{lem:GeneratingEgnets}).
\item Once we can apply arbitrary gates on sparse enough lattices, we can apply them on any lattice by refinement (done in the proof of Lemma~\ref{lem:GeneratingEgnets}).
\end{enumerate}



\begin{lemma}
\label{lem:CommuIsGate}
Let $f :X \to X$ be a homeomorphism with ntinuous inverse on a subshift $X$, and $\chi$ be a gate. Then $[f, \chi]$ is a gate.
\end{lemma}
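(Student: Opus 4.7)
The plan is to rewrite the commutator so that Lemma~\ref{lem:GatesNormal} applies directly. With the paper's convention $[a,b] = a^{-1}b^{-1}ab$ and $a^b = b^{-1}ab$, we have
\[ [f,\chi] \;=\; f^{-1}\chi^{-1}f\,\chi \;=\; (\chi^{-1})^{f} \circ \chi, \]
so it suffices to recognize both factors on the right as gates and note that a product of two gates is a gate.

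First I would observe that $\chi^{-1}$ is itself a gate: any weak neighborhood $N$ for $\chi$ works for $\chi^{-1}$, since $\chi^{-1}(x)_g = x_g$ for all $g \notin N$ follows from the fact that $\chi$ fixes the coordinates outside $N$ pointwise (bijectivity of $\chi$ on $X$ forces $\chi^{-1}$ to do the same there). Second, since $f$ has ntinuous inverse, Lemma~\ref{lem:GatesNormal} applied to the gate $\chi^{-1}$ gives that $(\chi^{-1})^{f}$ is again a gate; denote its weak neighborhood by $F$. Finally, for any $g$ outside the finite set $F \cup N$, we have
\[ [f,\chi](x)_g \;=\; \bigl((\chi^{-1})^{f} \circ \chi\bigr)(x)_g \;=\; \chi(x)_g \;=\; x_g, \]
so $F \cup N$ is a weak neighborhood for $[f,\chi]$, which is therefore a gate.

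There is essentially no obstacle here, since Lemma~\ref{lem:GatesNormal} has already done the substantive work (using ntinuity of $f^{-1}$ to guarantee that the support of $\chi^{-1}$ does not spread to infinity under conjugation by $f$). The only thing to be careful about is bookkeeping of the commutator convention and the trivial check that gates are closed under inverse and composition.
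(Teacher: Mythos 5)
Your proposal is correct and follows exactly the paper's route: decompose $[f,\chi] = (\chi^{-1})^{f} \circ \chi$ and invoke Lemma~\ref{lem:GatesNormal} for the conjugated factor, noting that gates are closed under inverses and products. The extra bookkeeping you supply (that $\chi^{-1}$ shares the weak neighborhood $N$ and that $F \cup N$ serves for the composition) is detail the paper leaves implicit, but the substance is identical.
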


\begin{proof}
To see that $[f, \chi] = f^{-1} \circ \chi^{-1} \circ f \circ \chi$ is a gate, it suffices to show that $(\chi^{-1})^f$ is a gate, and since $\chi$ is arbitrary it suffices to show that $\chi^f$ is. This is Lemma~\ref{lem:GatesNormal}.
\end{proof}

\begin{lemma}
\label{lem:CommuIsNontrivial}
Let $f : X \to X$ be a nontrivial homeomorphism with ntinuous inverse. If $X$ is an SFT with MFP then $[f, \chi]$ is a nontrivial gate, for some gate $\chi \in \egates$.
\end{lemma}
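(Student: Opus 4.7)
The plan is to construct an even gate $\chi$ that fixes some point $y$ with $f(y) \neq y$ but does \emph{not} fix $f(y)$. Given such a $\chi$, we compute $[f,\chi](y) = f^{-1}\chi^{-1}f\chi(y) = f^{-1}\chi^{-1}(f(y))$, and since $\chi^{-1}$ does not fix $f(y)$ and $f^{-1}$ is injective, this differs from $f^{-1}(f(y)) = y$, so $[f,\chi]$ acts nontrivially on $y$. Combined with Lemma~\ref{lem:CommuIsGate}, this shows $[f,\chi]$ is the required nontrivial gate.

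Since $f$ is nontrivial, pick $y \in X$ and $g \in G$ with $f(y)_g \neq y_g$. Fix a window $N$ for $X$ and a ball $B_r \ni g$, and set $N' = NB_r$. The gate $\chi$ will have strong neighborhood $N'$: it fixes every annular part $A_{r,N} = N' \setminus B_r$ of $N'$-patterns, and on the $B_r$-content performs a permutation $\pi_A \in \mathrm{Sym}(\follow(A, B_r))$ depending only on the annular pattern $A$; concretely, the local rule is $\hat\chi(A \sqcup P) = A \sqcup \pi_A(P)$. By the window property, $\follow(A, B_r)$ depends only on $A$, so $\chi$ is a well-defined homeomorphism of $X$. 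Moreover, the restriction of $\hat\chi$ to $\follow(z, N')$ for any outer context $z \in X|_{G \setminus N'}$ decomposes as the direct sum of the $\pi_A$'s over annuli $A$ compatible with $z$, so $\chi$ is an even gate provided every $\pi_A$ is even.

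Let $A_y = y|_{A_{r,N}}$ and $A' = f(y)|_{A_{r,N}}$; since $g \in B_r$ we have $y|_{B_r} \neq f(y)|_{B_r}$. Choose $r$ large enough that MFP together with the window property gives $|\follow(A, B_r)| \geq 4$ for every annular pattern $A$ occurring in $X$. If $A_y \neq A'$, set $\pi_{A_y} = \id$ and let $\pi_{A'}$ be any $3$-cycle on $\follow(A', B_r)$ moving $f(y)|_{B_r}$. If $A_y = A'$, choose $\pi_{A_y}$ to be a $3$-cycle on $\follow(A_y, B_r)$ that fixes $y|_{B_r}$ while moving $f(y)|_{B_r}$; the four-element lower bound leaves room for a $3$-cycle on three elements (including $f(y)|_{B_r}$) of $\follow(A_y, B_r) \setminus \{y|_{B_r}\}$. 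Set $\pi_A = \id$ in all other annular contexts. Each $\pi_A$ is the identity or a $3$-cycle, hence even, so $\chi$ is an even gate, and by construction $\chi(y) = y$ and $\chi(f(y)) \neq f(y)$.

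The main obstacle is engineering $\chi$ so as to \emph{locally} distinguish $y$ from $f(y)$ while keeping $\chi$ even. The window property is what makes this possible: it decouples the permutations in distinct annular contexts, so we may specify $\pi_{A_y}$ and $\pi_{A'}$ independently (or, when these annuli coincide, choose a single $3$-cycle treating $y|_{B_r}$ and $f(y)|_{B_r}$ asymmetrically), and MFP provides enough room in each follow-set to realise the necessary nontrivial \emph{even} cycle.
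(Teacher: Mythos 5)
Your proof is correct and follows essentially the same approach as the paper: build an even gate out of context-dependent $3$-cycles (using MFP for enough fillings and the window to decouple annular contexts) so that it treats $y$ and $f(y)$ asymmetrically. The only cosmetic differences are that your $\chi$ fixes $y$ and moves $f(y)$ (the paper's moves $x$ and fixes $f(x)$, then argues via non-preservation of the support rather than evaluating $[f,\chi]$ at a point), and you ask for $4$ fillings where the paper asks for $3$.
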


\begin{proof}
Let $R$ be a window for $X$. Let $x\in X$ be such that $f(x)_g \neq x_g$ for some $g \in G$, and pick a large enough $r$ so that any pattern $P$ on the annulus $A_{r,R}$ has at least $3$ fillings for $B_r$. By gates in $\egates$ we can realize any even permutation of $\mathcal{F}(P, RB_r)$ for any $P \in X|A_{r,R}$. 
Now pick $\chi$ any nontrivial $3$-rotation of patterns in $\mathcal{F}(x|A_{r,R}, RB_r)$ which has $x$ in its support, but not $f(x)$. If $x|A_{r,R} \neq f(x)|A_{r,R}$, this is trivial, otherwise it follows from $|\mathcal{F}(x|A_{r,R}, B_r)| \geq 3$, since of the at least three different fillings, at most one can agree with $f(x)$. Now $f$ cannot commute with $\chi$, as it does not preserve its support, so $[f, \chi]$ is nontrivial. By the previous lemma, it is a gate.
\end{proof}

\begin{lemma}
\label{lem:KinCommutator}
Let $f \in \Aut(X,H)$ for some $H \leq G$ of finite index, and $\chi$ a gate. If $K \leq H$ is sparse enough, then we have $[f, \chi]^K = [f, \chi^K]$ (and these expressions are well-defined).
\end{lemma}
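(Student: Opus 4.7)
The plan is to verify the identity by direct computation on finite subproducts, exploiting three commutation phenomena enabled by sparsity of $K$ and the fact that $k \in K \leq H$ means $\sigma_k$ commutes with $f$.

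First I would handle well-definedness. Since $f \in \Aut(X,H)$ is bintinuous by Lemma~\ref{lem:Binice}, Lemma~\ref{lem:CommuIsGate} gives that $[f,\chi]$ is a gate with some strong neighborhood $N'$, and $\chi$ already has a strong neighborhood $N$. For $K \leq H$ sparse enough (tending to the trivial group in Chabauty), the translates of $N'$ by $K$ are pairwise disjoint and the translates of $N$ by $K$ are pairwise disjoint, so both $[f,\chi]^K$ and $\chi^K$ are well-defined commuting products by Lemma~\ref{lem:PointwiseLimit}.

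Next, for each $k \in K \leq H$, since $\sigma_k$ commutes with $f$ I can compute
\[ [f,\chi]^k = \sigma_{k^{-1}} f^{-1} \chi^{-1} f \chi\, \sigma_k = f^{-1}(\chi^{-1})^k f\, \chi^k = [f,\chi^k]. \]
The main step is to show that for sparse enough $K$, the factors $f^{-1}(\chi^{-1})^k f$ and $\chi^{k'}$ commute whenever $k \neq k'$ in $K$. The weak neighborhood of $f^{-1}(\chi^{-1})^k f$ is controlled by the ntinuity moduli of $f$ and $f^{-1}$ applied to the strong neighborhood $Nk$ of $(\chi^{-1})^k$, so it sits inside some bounded thickening $Mk$ of $Nk$ (where $M$ depends only on $f,\chi$, not on $k$). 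Choosing $K$ sparse enough that $Mk \cap Nk' = \emptyset$ for all $k \neq k'$ in $K$ yields the desired commutation. This sparsity condition, which simultaneously subsumes the ones above, is the heart of the argument and is where I expect to spend the most care.

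With this pairwise commutation in hand, for any $F \Subset K$ I rearrange by sliding each $\chi^k$ past all the $f^{-1}(\chi^{-1})^{k'}f$ with $k' \neq k$:
\[ \prod_{k \in F} [f,\chi^k] = \prod_{k \in F} f^{-1}(\chi^{-1})^k f \cdot \chi^k = f^{-1}\Bigl(\prod_{k \in F}(\chi^{-1})^k\Bigr) f \Bigl(\prod_{k \in F}\chi^k\Bigr) = [f,\chi^F]. \]
Finally, passing to the uniform limit as $F$ exhausts $K$ and applying Lemma~\ref{lem:UniformComposition} to pull the (uniformly continuous) maps $f$ and $f^{-1}$ through the limits gives $[f,\chi]^K = [f,\chi^K]$, completing the proof.
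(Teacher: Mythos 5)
Your proposal is correct and follows essentially the same route as the paper: rewrite $[f,\chi]^k=[f,\chi^k]$ using $f^k=f$ for $k\in K\leq H$, establish that $\chi^{k'}$ commutes with $((\chi^{-1})^k)^f$ for sparse enough $K$, rearrange finite subproducts, and pass to the uniform limit via Lemma~\ref{lem:UniformComposition}. The one point you should make explicit is the uniformity in $k$ of your thickening $Mk$: it is not a consequence of ntinuity alone but of the identity $((\chi^{-1})^k)^f=((\chi^{-1})^f)^k$ (again using $f^k=f$) combined with Lemma~\ref{lem:GatesNormal}, which is exactly how the paper derives it.
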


\begin{proof}
Since $f$ is $H$-invariant, it is bintinuous. Since $[f, \chi]$ is a gate, and $f$ is also $K$-invariant, we have $f^k = f, (f^{-1})^k = f^{-1}$, so
\[ [f, \chi]^K = \prod_{k \in K} [f, \chi]^k = \prod_{k \in K} f^{-1} \circ (\chi^{-1})^k \circ f \circ \chi^k \]
is well-defined for sparse enough $K$.

Since $f$ is $H$-invariant and $K \leq H$, for sparse enough $K$, $\chi^k$ commutes with $((\chi^{-1})^{k'})^f$ whenever $k \neq k'$. Namely, we have
\[ ((\chi^{-1})^{k'})^f = f^{-1} k'^{-1} \chi^{-1} k' f = k'^{-1} f^{-1} \chi^{-1} f k' = ((\chi^{-1})^f)^{k'}, \]
and it follows from Lemma~\ref{lem:GatesNormal} that $(\chi^{-1})^f$ admits a strong neighborhood $N'$. If $N$ is the strong neighborhood of $\chi$, it suffices that $Nk \cap N'k' = \emptyset$ for $k \neq k'$, which is just the condition $k'k^{-1} \notin N'^{-1}N$ and holds for sparse enough $K$.

Thus, finite subproducts of $[f, \chi]^K$ can be rearranged to approximations of $[f, \chi^K]$, in the sense that if $F \subset K$ is finite then
\begin{align*}
\prod_{k \in F} f^{-1} \circ (\chi^{-1})^k \circ f \circ \chi^k &= \prod_{k \in F} ((\chi^{-1})^k)^f \circ \chi^k \\
&= \prod_{k \in F} ((\chi^{-1})^k)^f \circ \prod_{k \in F} \chi^k \\
&= [f, \chi^F],
\end{align*}
thus the two infinite products are also the same.
\end{proof}

\begin{lemma}
\label{lem:AnSimplePlus}
For any $n \geq 5$ and $f \in S_n \setminus \{e_{S_n}\}$, the smallest subgroup of $S_n$ containing $f$ and invariant under conjugation by $A_n$ contains $A_n$.
\end{lemma}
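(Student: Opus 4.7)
The plan is to exploit the simplicity of $A_n$ for $n \geq 5$. Let $H$ denote the smallest subgroup of $S_n$ containing $f$ and closed under conjugation by $A_n$. Since $A_n$ normalizes both $H$ and itself, it normalizes $H \cap A_n$, so $H \cap A_n$ is a normal subgroup of $A_n$. By simplicity of $A_n$, there are only two cases: either $H \cap A_n = A_n$, in which case $A_n \leq H$ and we are done, or $H \cap A_n = \{1\}$, which we must rule out.

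Suppose for contradiction that $H \cap A_n = \{1\}$. Then $f \neq 1$ must be odd, for otherwise $f \in H \cap A_n$. For every $a \in A_n$, the conjugate $a^{-1}fa$ lies in $H$ and is also odd, so $f \cdot (a^{-1}fa)$ is an even element of $H$ and hence trivial, forcing $a^{-1}fa = f^{-1}$ for all $a \in A_n$. Applying this relation with $ab$ in place of $a$ and comparing it with two successive conjugations by $a$ and $b$, one gets $f^{-1} = (ab)^{-1}f(ab) = b^{-1}f^{-1}b = f$, so $f^2 = 1$ and therefore $a^{-1}fa = f$ for every $a \in A_n$. In other words, $f \in C_{S_n}(A_n)$.

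The proof now reduces to the classical fact that $C_{S_n}(A_n) = \{1\}$ for $n \geq 5$, which I would dispatch in a short inline argument: if $f \neq 1$ and $f(i) = j \neq i$, pick any three-cycle $(i, k, \ell) \in A_n$; commuting with $f$ forces $(f(i), f(k), f(\ell)) = (i, k, \ell)$ as a cyclic word, so in particular $j \in \{i, k, \ell\}$; since $n \geq 5$ allows us to vary $\{k, \ell\}$ among disjoint pairs in $\{1, \dots, n\} \setminus \{i, j\}$, the intersection of the possible sets $\{i, k, \ell\}$ collapses to $\{i\}$, contradicting $j \neq i$. This contradicts $f \neq 1_{S_n}$ and rules out the case $H \cap A_n = \{1\}$, completing the proof.

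The argument is essentially routine once the $A_n$-invariance of $H$ is translated into normality of $H \cap A_n$ inside $A_n$; the only mild obstacle is the combinatorial fact that $A_n$ has trivial centralizer in $S_n$ for $n \geq 5$, which is standard and admits the short inline proof sketched above.
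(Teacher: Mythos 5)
Your proof is correct and rests on the same two pillars as the paper's: the normality of $H \cap A_n$ in $A_n$ together with simplicity, and the fact that a nontrivial permutation cannot commute with every $3$-cycle, which is exactly what produces the nontrivial even element $f^{-1}(a^{-1}fa) \in H$. The paper runs this directly (exhibiting a nontrivial commutator $[f,\pi]$ for a $3$-cycle $\pi$) rather than by contradiction with the detour through $f^2 = 1$, but the content is the same.
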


\begin{proof}
If $f \in S_n \setminus A_n$ is not the identity permutation, observe that $f$ cannot commute with every $3$-cycle, as it does not preserve the support of every $3$-cycle, and thus we find a nontrivial commutator between $f$ and $f^{\pi}$ for a $3$-cycle $\pi$. Thus we have generated a nontrivial element of $A_n$. If $f \in A_n$, the claim is clear since $A_n$ is simple.
\end{proof}


\begin{lemma}
\label{lem:GeneratingEgnets}
Let $X$ be an EFP SFT on a residually finite countably infinite group $G$, and let $f \in \Aut(X, H)$ be nontrivial where $H \leq G$ is of finite index. Then
\[ \langle f^{\egnets_{\mathcal{I}}} \rangle = \langle f, \egnets_{\mathcal{I}} \rangle \]
for any lattice net $\mathcal{I}$ containing $H$.
\end{lemma}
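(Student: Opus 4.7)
The inclusion $\langle f^{\egnets_{\mathcal{I}}} \rangle \subseteq \langle f, \egnets_{\mathcal{I}} \rangle$ is immediate, so setting $N = \langle f^{\egnets_{\mathcal{I}}} \rangle$, it suffices to prove $\egnets_{\mathcal{I}} \subseteq N$. I would proceed along the four-point outline preceding the statement. Since $f \in \Aut(X, H)$, it is bintinuous (Lemma~\ref{lem:Binice}); $X$ is nontrivial (as $f$ is) and EFP, hence has MFP, so Lemma~\ref{lem:CommuIsNontrivial} produces an even gate $\chi \in \egates$ with $\eta_0 := [f, \chi]$ a nontrivial gate (Lemma~\ref{lem:CommuIsGate}). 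For $K \in \mathcal{I}$ with $K \leq H$ and sparse enough for Lemma~\ref{lem:KinCommutator}, one has $\eta_0^K = [f, \chi^K] \in N$ because $\chi^K \in \egnets_{\mathcal{I}}$. This produces the first nontrivial gate lattice inside $N$.

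The bulk of the argument is showing that for every even gate $\eta$ there is a sparse $K' \in \mathcal{I}$ with $K' \leq H$ and $\eta^{K'} \in N$. Rerunning the previous paragraph with $K'$ in place of $K$ puts $\eta_0^{K'}$ into $N$; for any even gate $\xi$ whose strong neighborhood is small compared to $K'$, a rearrangement as in Lemma~\ref{lem:KinCommutator} gives
\[ (\eta_0^{K'})^{\xi^{K'}} = (\eta_0^\xi)^{K'}, \]
which lies in $N$ since $\xi^{K'} \in \egnets_{\mathcal{I}}$. Because translates of gates with disjoint supports commute, taking products and iterating shows that $N$ contains $\eta^{K'}$ for every $\eta$ in the smallest subgroup $H(\eta_0)$ of gates that contains $\eta_0$ and is closed under conjugation by (sufficiently small-support) even gates. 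Identifying gates with strong neighborhood $M$ with $\prod_{y \in X|(G \setminus M)} \Sym(\follow(y, M))$, with even gates corresponding to the factorwise-alternating elements and $\egates$-conjugation acting factorwise, Lemma~\ref{lem:AnSimplePlus} applied in each factor gives $H(\eta_0) \supseteq \egates$ as soon as $\eta_0$ (viewed with strong neighborhood $M$) acts nontrivially in every context and $|\follow(y,M)| \geq 5$ everywhere.

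Ensuring this nontriviality in every context is where EFP is used, and this is the main obstacle. Pick a non-fixed pair $(p_1, p_2)$ of the local rule $\hat\eta_0$ on $M_0$; the crucial observation is that $\hat\eta_0$ is a bijection of $X|M_0$ respecting the partition into follow-sets, so $p_1 \in \follow(y', M_0)$ iff $p_2 \in \follow(y', M_0)$, and $\eta_0$ is automatically nontrivial on every $(G\setminus M_0)$-context containing $p_1$. Applying EFP with $F = M_0$ produces $M \supset M_0$ such that every $(G \setminus M)$-context extends to a valid configuration with the prescribed $M_0$-pattern $p_1$ (the filling on $M \setminus M_0$ being arbitrary); hence $\eta_0$, viewed with strong neighborhood $M$, is nontrivial on every context of $M$. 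Enlarging $M$ further via MFP arranges $|\follow(y,M)| \geq 5$ throughout, completing the factorwise application of Lemma~\ref{lem:AnSimplePlus}.

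Finally, for an arbitrary $\mu^{Lg} \in \egnets_{\mathcal{I}}$, I would choose $K' \in \mathcal{I}$ with $K' \leq L$, $K' \leq H$, sparse enough to apply the previous step to the translates of $\mu$. By Lemma~\ref{lem:Decomposition} together with Lemma~\ref{lem:LatticeBase}, $\mu^{Lg}$ is a finite commuting product of terms of the form $(\mu^{hg})^{K'}$, each of which lies in $N$ since $\mu^{hg}$ is still an even gate. Thus $\egnets_{\mathcal{I}} \subseteq N$ and the proof is complete.
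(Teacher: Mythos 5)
Your proposal is correct, and its core follows the same route as the paper: commutator $f$ with an even gate to obtain a nontrivial gate $\eta_0$, push the commutator onto a sparse lattice via Lemma~\ref{lem:KinCommutator}, use EFP (plus MFP) to arrange that $\eta_0$, read with a large enough strong neighborhood, acts nontrivially in \emph{every} context with at least $5$ fillings, and then apply Lemma~\ref{lem:AnSimplePlus} context by context to generate all even gates on sparse sublattices of $H$. Where you genuinely diverge is the final reduction from sparse $K' \leq H$ to an arbitrary generator $\mu^{Lg}$ with $L \in \mathcal{I}$ and $g \in G$: the paper first uses $f^h = f$ to translate within $H$, and for $g \notin H$ conjugates the whole situation, replacing $f$ by $f^g \in \Aut(X, H^g)$ and unwinding the resulting product to extract $\chi^{Kg^{-1}}$; you instead absorb the translation into the gate, writing $\mu^{Lg}$ as a commuting product of $\mu^{K'hg} = (\mu^{hg})^{K'}$ and applying the main step to the finitely many translated even gates $\mu^{hg}$. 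This is cleaner and avoids conjugating $f$ altogether; note, however, that the identity $\mu^{K'hg} = (\mu^{hg})^{K'}$ requires $K'$ normal in $G$ (so that $K'hg = hgK'$, as in Lemma~\ref{lem:NormalAuto}), which you do not state but which is available since lattice nets have cofinal normal subnets, and you need one $K' \leq L \cap H$ sparse enough for all $h$ in the transversal at once, which is fine as the transversal is finite. One presentational caveat: identifying gates with the full product $\prod_{y} \Sym(\follow(y,M))$ overstates the case, since a general element of that product need not be continuous; the factorwise use of Lemma~\ref{lem:AnSimplePlus} is legitimate because, by the SFT window property, everything factors through the finitely many annulus contexts, which is how the paper phrases this step.
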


In words, the conclusion is that the smallest subgroup of $\Homeo(X)$ which contains all conjugates of $f$ by even gate lattices actually contains all even gate lattices.

\begin{proof}
Take an arbitrary nontrivial element $f \in \Aut(X, H)$. By Lemma~\ref{lem:AutBint}, $f$ is bintinuous. Since $f$ it is nontrivial, by Lemma~\ref{lem:CommuIsNontrivial}, $[f, \chi]$ is a nontrivial gate for some $\chi \in \egates$. Writing $\chi_0 = [f, \chi]$, we have by Lemma~\ref{lem:KinCommutator} that $\chi_0^K = [f, \chi^K]$ for any sparse enough $K \in \mathcal{I}$. Since 
$[f, \chi^K] = f^{-1} f^{\chi^K}$, we have that
$\langle f^{\egnets_{\mathcal{I}}} \rangle$ contains these gate lattices $\chi_0^K$.

We next show that from these, one can generate all even gate lattices $\chi^K$ where $K \leq H$, $K \in \mathcal{I}$. Let $R$ be a window for $X$, let $N$ be a strong neighborhood such that $\chi_0$ performs a nontrivial permutation of $X|N$, and large enough so that the cardinality of the latter set is at least $5$. If we pick a large enough $r$, then every $A_{r,R}$-context $P \in X|A_{r,R}$ allows an extension to any pattern in $N$ by EFP. Thus with the strong neighborhood $RB_r$, $\chi_0$ performs a permutation that fixes the $A_{r,R}$-context and acts nontrivially on the $B_r$-continuation for any context.

In a fixed such context $P \in X|A_{r,R}$ we can represent any even permutation of its extension patterns to $RB_r$ (i.e.\ the set $\follow(P, RB_r)$) as a composition of conjugates of $\chi_0$ by commutators with even permutations that fix the context $P$, by Lemma~\ref{lem:AnSimplePlus}. Composing these representations over all contexts, we can represent any even gate as a commutator expression involving only $\egates$-conjugates of $\chi_0$. Doing this simultaneously on all positions of a sparse enough finite-index subgroup $K \in \mathcal{I}$, we obtain for any even gate $\chi$ all elements of the form $\chi^K$ for all sparse enough $K \in \mathcal{I}$.

Consider now $K \leq H$, $K \in \mathcal{I}$ and suppose $\chi$ is an even gate and $K$ is sparse enough so that $\chi^K$ can be built with the above construction. Observe that since $f \in \Aut(X, H)$ we have $f^h = f$ for all $h \in H$, and thus if we conjugate the expression for $\chi^K$ (which is a composition of conjugates of $f$ by elements of $\egnets_{\mathcal{I}}$), the $h$-translation only affects the $\egnets_{\mathcal{I}}$-elements, so actually we obtain $\chi^{Kh}$ for cosets of $K$ in $H$.

To say the same in formulas, if
\[ \chi^K = \prod_i f^{\chi_i^{K_i}} \]
(note that this is a finite ordered product; actually $K_i = K$ for all $i$ in our construction, but this is immaterial) then using Lemma~\ref{lem:LatticeBase}
we have
\begin{align*}
\prod_i f^{\chi_i^{K_ih}} &= \prod_i (\chi_i^{-1})^{K_ih} \circ f \circ \chi_i^{K_ih} \\
&= \prod_i ((\chi_i^{-1})^{K_i})^h \circ f^h \circ (\chi_i^{K_i})^h \\
&= \prod_i (f^{\chi_i^{K_i}})^h \\
&= (\prod_i f^{\chi_i^{K_i}})^h \\
&= (\chi^K)^h = \chi^{Kh}.
\end{align*}

Consider next an arbitrary $L \leq H$, $L \in \mathcal{I}$, where again $\chi$ is an even gate and $\chi^L$ commutes. Let $K \leq L, K \in \mathcal{I}$ be any sparse enough subgroup such that $\chi^K$ is generated by $f^{\egnets_{\mathcal{I}}}$. Then by the two previous paragraphs, $\chi^{Kh}$ can also be built, where $h$ runs over right coset representatives for $K$ in $L$. By Lemma~\ref{lem:Decomposition}, we can build $\chi^L$. (To recall the argument, since $\chi^K$ commutes, the composition of the $\chi^{Kh}$ over coset representatives is precisely $\chi^L$.) We conclude that $\langle f^{\egnets_{\mathcal{I}}} \rangle$ contains every gate lattice $\chi^{Lh}$ where $\chi$ is even, $L \leq H$ and $h \in H$ is arbitrary.

Next, consider an arbitrary commuting $\chi^L$ for $L \in \mathcal{I}$, where not necessarily $L \leq H$. By what we already showed, $\chi^K$ in $\langle f^{\egnets_{\mathcal{I}}} \rangle$ where $K \leq L \cap H, K \triangleleft G, K \in \mathcal{I}$ is arbitrary. Now conjugating the situation with some $g \in G$, we transform the original map $f$ into some $f^g$, which is still a nontrivial homeomorphism that now commutes with $H^g$, and applying the entire discussion to it (observing $K \leq H^g$ by $K \leq H$ and normality), conjugates of $f^g$ by elements of $\egnets_{\mathcal{I}}$ also generate the same gate lattice $\chi^K$.

For some $\chi_i, K_i$ we now have
\[ \chi^K = \prod_i (f^g)^{\chi_i^{K_i}}, \]
and we can continue with
\begin{align*}
\chi^K = \prod_i(f^g)^{\chi_i^{K_i}} &= \prod_i(\chi_i^{-1})^{K_i} \circ g^{-1} \circ f \circ g \circ \chi_i^{K_i} \\
&= \prod_i {g^{-1}} \circ ((\chi_i^{-1})^{K_i})^{g^{-1}} \circ f \circ (\chi_i^{K_i})^{g^{-1}} \circ g \\
&= (\prod_i (\chi_i^{-1})^{K_i g^{-1}} \circ f \circ \chi_i^{K_i g^{-1}})^g.
\end{align*}
Conjugating both sides by $g^{-1}$, we see that $\langle f^{\egnets_{\mathcal{I}}} \rangle$ contains the element $(\chi^K)^{g^{-1}} = \chi^{Kg^{-1}}$ for arbitrary $g \in G$. In particular by composing these with $g^{-1} = t h$ where $t$ ranges over right coset representatives of $K$ in $L$, we obtain precisely $\chi^{Lh}$.
\end{proof}

\begin{lemma}
\label{lem:Simple}
Let $X$ be an EFP SFT on a residually finite countably infinite group $G$, and let $\mathcal{I}$ be any lattice net. Then the group $\egnets_{\mathcal{I}}$ is simple.
\end{lemma}

\begin{proof}
Otherwise, let $f \in \egnets_{\mathcal{I}}$ be arbitrary, i.e.\ $f$ is a product of some elements $(\chi_i)^{H_i g_i}$ where the $\chi_i$ are even gates. Picking any normal subgroup $H \in \mathcal{I}$ of $G$ contained in the intersection of the $H_i$, Lemma~\ref{lem:Decomposition} shows that $f$ is a product of gates of the form $(\chi_i)^{H g_i}$ with $H$ normal. By Lemma~\ref{lem:NormalAuto}, $f$ is an automorphism for the $H$-action. By Lemma~\ref{lem:GeneratingEgnets},
\[ \langle f^{\egnets_{\mathcal{I}}} \rangle = \langle f, \egnets_{\mathcal{I}} \rangle \]
so the smallest normal subgroup of $\egnets_{\mathcal{I}}$ containing $f$ is all of $\egnets_{\mathcal{I}}$. This concludes the proof of simplicity.
\end{proof}

\begin{lemma}
\label{lem:StabUnderAut}
Let $X$ be a subshift on a residually finite countably infinite group $G$, and ${\mathcal{I}}$ a lattice net. Then $\gnets_{\mathcal{I}}$ is a normal subgroup of the stabilized automorphism group $\SAut(X, \mathcal{I})$. Furthermore, if in addition $X$ is an MFP SFT, then $\egnets_{\mathcal{I}}$ is also a normal subgroup of $\SAut(X, \mathcal{I})$.
\end{lemma}

\begin{proof}
It suffices to show $(\chi^{Kg})^f \in \gnets_{\mathcal{I}}$ whenever $f \in \Aut(X,H)$ for finite-index $H \in \mathcal{I}$ and $K \in \mathcal{I}$ is a sparse enough finite-index subgroup of $H$, which is normal in $G$. 
We calculate
\begin{align*}
(\chi^{Kg})^f &= (\prod_{k \in K} \chi^{kg})^f = (\prod_{k \in K} \chi^{gk})^f = \prod_{k \in K} (\chi^{gk})^f \\
&= \prod_{k \in K} f^{-1}k^{-1}g^{-1}\chi gk f \\
&= \prod_{k \in K} k^{-1} f^{-1} g^{-1}\chi g f k \\
&= \prod_{k \in K} k^{-1} \chi^{f'} k, \mbox{ where } f' = gf \\
&= (\chi^{f'})^K
\end{align*}

Here the second equality follows from the normality of $K$. The third equality follows from bintinuity of $f$, Lemma~\ref{lem:UniformComposition} and commutation of $(\chi^{kg})^f$ for different $k \in K$; and the functions in the composition on the right side of the third equality commute, because the functions in the original composition commute. The fifth equality holds because $K \leq H$ and $f \in \Aut(X, H)$ imply $f \in \Aut(X, K)$.


By Lemma~\ref{lem:GatesNormal}, conjugating gates by $f \in \Aut(X,H)$ produces gates, so $(\chi^{f'})^K$ is indeed a gate lattice. Under the additional assumption, if $\chi$ is even then so is $\chi^{f'}$, by Lemma~\ref{lem:GatesNormal} and Lemma~\ref{lem:CommutatorCharacterization}, and thus $\egnets$ is also normal.
\end{proof}

Again recall that the \emph{monolith} of a group, if it exists, is its unique maximal non-trivial normal subgroup.

\begin{theorem}
\label{thm:MainProof}
For any residually finite countably infinite group $G$, any EFP SFT $X \subset \Sigma^G$, and any lattice net $\mathcal{I}$,
\begin{itemize}
\item $\egnets(X)_{\mathcal{I}}$ is simple,
\item $\egnets(X)_{\mathcal{I}}$ equals the commutator subgroup of $\gnets(X)_{\mathcal{I}}$,
\item $\egnets(X)_{\mathcal{I}}$ and $\gnets(X)_{\mathcal{I}}$ are normal in $\SAut(X)_{\mathcal{I}}$, and
\item $\egnets(X)_{\mathcal{I}}$ is the monolith of $\gnets(X)_{\mathcal{I}}$ and $\SAut(X)_{\mathcal{I}}$.
\end{itemize}
\end{theorem}

\begin{proof}
The first item is Lemma~\ref{lem:Simple}. The second item is Lemma~\ref{lem:egnetsIsCommu}. The third item is Lemma~\ref{lem:StabUnderAut}. For the fourth item, consider any non-trivial normal subgroup of $\SAut(X)_{\mathcal{I}}$ or $\gnets_{\mathcal{I}}$. Let $f$ be any element of this subgroup, so in particular $f \in \Aut(X, H)$ for some $H \in \mathcal{I}$. By Lemma~\ref{lem:GeneratingEgnets}, 
\[ \langle f^{\egnets_{\mathcal{I}}} \rangle = \langle f, \egnets_{\mathcal{I}} \rangle, \]
i.e.\ already $\egnets_{\mathcal{I}}$-conjugates of $f$ generate a group containing $\egnets_{\mathcal{I}}$. 
\end{proof}

\begin{remark}
For $G = \Z^d$, Theorem~\ref{thm:MainProof}, Proposition~\ref{prop:IsMoreGeneral} and Lemma~\ref{lem:Zdcofinality} show that the monolith of $\SAut(X, G)$ has a rich internal structure of infinite simple subgroups (the subgroups $\egnets_{\mathcal{I}}$).
\end{remark}

\section{Open problems}

\begin{question}
Can we define a notion of inert automorphism for subshifts on groups other than $\Z$ (through the introduction of a dimension group, or something in that spirit) so that $\egnets$ is recovered?
\end{question}

A related question is whether $\Aut(X) \cap \egnets(X)$ is the ``right'' notion of inertness in automorphism groups.

\begin{question}
Does $\gnets = [\gnets, \gnets]$ hold on all EFP SFTs and all residually finite infinite groups?
\end{question}

If equality does not always hold, one may ask if the rank can be infinite. Of course one could ask such questions also for general lattice nets.

\begin{question}
To what extent can we generalize the results of this paper beyond EFP SFTs?
\end{question}

Section~\ref{sec:LatticeNets} suggests the following technical questions.

\begin{question}
In Proposition~\ref{prop:IsMoreGeneral}, can we replace the far-from-cofinality assumption with natural dynamical assumptions on $X$?
\end{question}

\begin{question}
Is there an example of a pair of lattice nets $\mathcal{I}, \mathcal{J}$ on some residually finite group, such that $\mathcal{J}$ is neither cofinal in $\mathcal{I}$ nor far from cofinal in $\mathcal{I}$?
\end{question}

\section*{Acknowledgements}

We thank the anonymous referees for very detailed comments, which improved the paper throughout. In particular, we thank one referee for pushing us to prove Lemma~\ref{lem:egnetsIsCommu}, which clarified the message of the paper considerably. Proposition~\ref{prop:AbelianizationTwo} is due to the referee.

\bibliographystyle{plain}
\bibliography{../../../bib/bib}{}

\newpage
\appendix
\section{Gates as a topological full group}
\label{sec:GatesTFG}

We make a simple remark for readers interested in equivalence relations and groupoids, namely we explain how gates can be seen as a topological full group of a natural groupoid associated to the asymptotic relation. We simply wish to point out that gates are not really a ``new object'', but a special case of a standard construction.

We use the conventions from \cite{Ma16}, except our groupoids are denoted $H = (H^{(0)}, H^{(1)}, H^{(2)}, s, r)$ (\cite{Ma16} uses $G$ in place of $H$, but it is the name of our ambient group). We omit standard groupoid-related definitions; the reader can find them in \cite{Ma16} or any other reference.

One way of turning the asymptotic relation into an étale groupoid is to take $H^{(0)} = X$ and
\[ H^{(1)} = \{(x, y, F) \;|\; F \Subset G \wedge x, y \in X \wedge \diff(x, y) = F \}, \]
where $\diff(x, y) = \{g \in G \;|\; x_g \neq y_g\}$. We identify $H^{(0)}$ with the tuples $(x, x, \emptyset)$. We use the discrete topology on the finite sets $F$ (i.e.\ the subset of $H^{(1)}$ using a particular $F$ is open), and for each $F$ we use the topology induced from $X^2$ on the pair $(x, y)$. The source and target maps $s, r$ are defined by $s(x, y, F) = x$ and $r(x, y, F) = y$. The groupoid operation is $(x,y, F_1)(y, z, F_2) = (x, z, \diff(x, z))$. We call this the \emph{asymptoticity groupoid}.

If $s, r$ are injective on $U \subset H^{(1)}$, we call $U$ an \emph{$H$-set}, and $\tau_U : s(U) \to r(U)$ is the homeomorphism defined by $\tau_U(x) = r((s|_U)^{-1}(x))$. The following is Definition~4.1 from \cite{Ma16}.

\begin{definition}
Suppose $H$ is an essentially principal \'etale groupoid, and the unit space $H^{(0)}$ is Cantor. Then the set of $f : H^{(0)} \to H^{(0)}$ for which there exists a compact open $H$-set $U$ satisfying $f = \tau_U$ is called the \emph{topological full group} of $H$, denoted $\llbracket H \rrbracket$.
\end{definition}


In the following proposition, the standing assumption that $g$ is residually finite and countable is not needed.

\begin{proposition}
Let $X$ be an MFP SFT. Then its asymptoticity groupoid is a principal \'etale AF groupoid whose unit space is a Cantor space. A homeomorphism $\chi$ is a gate if and only if it is in the topological full group of the asymptoticity groupoid of $X$.
\end{proposition}

\begin{proof}
For any subshift, the groupoid is trivially principal (as in morphisms $(x, y, F)$ outside $H^{(0)}$ we require that $x, y$ differ in $F$). It easily seen to be second countable, and the unit space $H^{(0)} = X$ is a compact subset of Cantor space. In the case of an MFP SFT $X$, it is homeomorphic to Cantor space by Lemma~\ref{lem:EFPMFPCantor}.

The groupoid is étale. For this, we need to show that $s$ and $r$ are local homeomorphisms, i.e.\ every point has an open neighborhood $U$ which has open image, and $U$ is mapped homeomorphically to its image. We show this for $s$. Consider for any $(x, y, F) \in H^{(1)}$ the set $U = [x|F']_{F'} \times [y|F']_{F'} \times \{F\}$ where $F' \supset NF$, where $N$ is a window for $X$. Then the image is precisely the clopen set $[x|F']_{F'} \subset X$, in particular it is open.

To see this, we show that substituting the contents of $F$ in some $x' \in [x|F']_{F'}$ with $y|F$ does not lead to a forbidden pattern. If on the contrary replacing the contents in $F$ in $x' \in [x|F']_{F'}$ were to introduce a forbidden pattern $P$, say $sx|D = P$, then $Ds \cap F \neq \emptyset$ so for some $d \in D$ we have $ds \in F$, meaning $Ds \subset Dd^{-1}F \subset NF$, since by the definition of a window $N$ contains $DD^{-1}$ for the defining forbidden patterns. Since $F' = DF$ and $y|F'$ is a legal pattern in $X$, this is not possible. It is trivial that restricted to $U$ the map is a homeomorphism, so the groupoid is \'etale.
 
The groupoid is AF. Namely, for $F' \Subset G$ write $H_{F'} = H^{(1)} \cap X \times X \times \{F \;|\; F \subset F'\}$. This is easily seen to be a subgroupoid, and now letting $F_n$ be any increasing sequence of finite sets with union $G$, the sets $H_{F_n}$ can be taken as the compact approximations.

We now show the last claim. Suppose $\chi$ is a gate. Pick a strong neighborhood $R \Subset G$. If $\chi(P) = Q$, let $F_P = \diff(P, Q)$ for any two patterns $P, Q$, i.e.\ the set of coordinates where $P$ differs from its $\chi$-image, and define $U = \{(x, \chi(x), F_{x|R}) \;|\; x \in X\}$. Then $U$ is a compact open $H$-set and we have $\tau_U = \chi$ by a straightforward calculation. Conversely, if $\chi = \tau_U : X \to X$ is a homeomorphism, where $U \subset H$ is a compact open $H$-set, then $U \subset H_F$ for some finite set $F$ by the proof of the AF property, and then obviously $F$ is a weak neighborhood.
\end{proof}

\end{document}